\newif\ifpictures
\picturestrue 
 
\documentclass[11pt]{amsart}
\usepackage{amssymb}
\usepackage{amsmath}
\usepackage{dsfont}
\usepackage{amscd}
\usepackage{blkarray}
\usepackage[pdftex]{graphicx}
\usepackage[normalem]{ulem}
\usepackage{hyperref}
\usepackage{tikz}
\usepackage{tikz-3dplot} 
\usepackage{xcolor}
\usepackage{bbm} 
\usepackage{amssymb}
\usepackage{amsmath}
\usepackage{youngtab}
\usepackage{young}
\usepackage{xfrac}
\usepackage{enumitem}
\usepackage{float}
\usepackage{array}
\usepackage{ytableau}
\usepackage{amsfonts,mathrsfs}
\usepackage{bbm,amsmath,amssymb,amsfonts,graphicx,color,subfig,mathtools, verbatim}

\ifpictures
\usepackage{pgfplots}

\fi
\usepackage{mathrsfs}
\usetikzlibrary{arrows}
\usepackage{siunitx}
\headheight=8pt
\topmargin=30pt 
\textheight=611pt     \textwidth=456pt
\oddsidemargin=6pt   \evensidemargin=6pt

\frenchspacing

\newcommand\pgfmathsinandcos[3]{%
  \pgfmathsetmacro#1{sin(#3)}%
  \pgfmathsetmacro#2{cos(#3)}%
}

\renewcommand{\ge}{\geqslant}
\renewcommand{\geq}{\geqslant}
\renewcommand{\le}{\leqslant}
\renewcommand{\leq}{\leqslant}

\numberwithin{equation}{section}
\newtheorem{thm}{Theorem}
\newtheorem{prop}[thm]{Proposition}
\newtheorem{lemma}[thm]{Lemma}
\newtheorem{cor}[thm]{Corollary}

\theoremstyle{definition}

\newtheorem{example}[thm]{Example}
\newtheorem{remark1}[thm]{Remark}
\newtheorem{openproblem1}[thm]{Open problem}
\newtheorem{definition}[thm]{Definition}

\DeclareMathOperator{\len}{len}

\newenvironment{ex}{\begin{example}\rm}{\hfill$\Box$\end{example}}
\newenvironment{rem}{\begin{remark1}\rm}{\end{remark1}}

\numberwithin{thm}{section}

\newcounter{FNC}[page]
\def\newfootnote#1{{\addtocounter{FNC}{2}$^\fnsymbol{FNC}$%
		\let\thefootnote\relax\footnotetext{$^\fnsymbol{FNC}$#1}}}

\newcommand{\N}{\mathbb{N}}

\newcommand{\R}{\mathbb{R}}

\newcommand{\halpha}{\hat{\alpha}}
\newcommand{\hbeta}{\hat{\beta}}

\newcommand{\sonc}{\mathrm{SONC}}
\newcommand{\sage}{\mathrm{SAGE}}

\renewcommand{\S}{\mathcal{S}}

\newcommand{\cA}{\mathcal{A}}
\newcommand{\cX}{\mathcal{T}}
\newcommand{\cB}{\mathcal{B}}

\newcommand{\cT}{\mathcal{T}}

\newcommand{\Sc}{\mathcal{S}}

\DeclareMathOperator{\conv}{conv}

\DeclareMathOperator{\supp}{supp}

\DeclareMathOperator{\inter}{int}
\DeclareMathOperator{\relinter}{relint}

\DeclareMathOperator{\diag}{diag}

\DeclareMathOperator{\Span}{span}

\DeclareMathOperator{\myspan}{span}
\DeclareMathOperator{\sig}{sig}
\DeclareMathOperator{\id}{id}

\DeclareMathOperator{\Stab}{Stab}

\newcommand{\sym}{\mathcal{S}}

\usepackage{todonotes}


\usepackage{xcolor}

\setlength{\parskip}{0cm plus2pt minus0pt}

\title{Symmetric SAGE and SONC forms, exactness and quantitative gaps}

\author{Philippe Moustrou}

\author{Cordian Riener}

\author{Thorsten Theobald}

\author{Hugues Verdure}

\address{Cordian Riener, Hugues Verdure:
  Department of Mathematics and Statistics, UiT -- The Arctic University of Norway,
  9037 Troms\o, Norway}

\address{Philippe Moustrou: 
Institut de Math\'{e}matiques de Toulouse,
Universit\'{e} Toulouse Jean Jaures, 31058 Toulouse, France
}

\address{Thorsten Theobald:
        FB 12 -- Institut f\"ur Mathematik, Goethe-Uni\-ver\-si\-t\"at,
        Postfach 11 19 32, 60054 Frankfurt am Main, Germany}

\date{\today}

\thanks{The authors gratefully acknowledge partial support through the project ''Real Algebraic Geometry and Optimization'' jointly funded by the German Academic Exchange Service DAAD and the Research Council of Norway RCN, through the
Troms{\o} Research Foundation grant agreement 17matteCR and grant ``Pure Mathematics in Norway'', the UiT Aurora project MASCOT and DFG grant 539847176.}

\dedicatory{Dedicated to the memory of \'Agnes Sz\'ant\'o, an inspiring mathematical colleague and friend. Her contributions to computer algebra and symbolic computation, along with her scientific curiosity and dedication to our community, have left a lasting impression on us. We cherish the memory of her contributions and friendship.}

\begin{document}
	\begin{abstract}
	   The classes of sums of arithmetic-geometric exponentials (SAGE)
	   and of sums of nonnegative circuit polynomials (SONC) provide
	   nonnegativity certificates which are based on the inequality
	   of the arithmetic and geometric means.
	   We study the cones of symmetric SAGE and SONC forms
	   and their relations to the underlying symmetric nonnegative cone.
	   
	   As main results, we provide several symmetric cases where 
	   the SAGE or SONC property coincides with nonnegativity
	   and we present quantitative results on the differences in
	   various situations. The results rely on characterizations of the
	   zeroes and the minimizers for symmetric SAGE and SONC forms,
	   which we develop.
	   Finally, we also study symmetric monomial mean inequalities and apply SONC certificates to establish a generalized version of Muirhead's inequality.
	\end{abstract}

	\maketitle
	
	\section{Introduction}

The inequality of the arithmetic and geometric means (AM/GM inequality)
is one of the classical topics in calculus which also can be applied in various contexts. 
Building on work of 
Reznick \cite{reznick-1989} and further developed by 
Pantea, Koeppl and Craciun \cite{Pantea2012},
Iliman and de Wolff \cite{iliman-dewolff-resmathsci}
as well as Chandrasekaran and Shah \cite{chandrasekaran-shah-2016},
there has recently been renewed interest in polynomials and more generally signomials
(i.e., exponential sums), whose nonnegativity results from applying 
the weighted AM/GM inequality. For example, given $\alpha_0, \ldots, \alpha_m \in \R^n$
and $\lambda = (\lambda_1, \ldots, \lambda_m) \in \R_{+}^n$
with $\sum_{i=1}^m \lambda_i = 1$ and $\sum_{i=1}^m \lambda_i \alpha_i= \alpha_0$,
the signomial
        \[
        \sum_{i=1}^m \lambda_i \exp({\langle \alpha_i, x}\rangle) - \exp({\langle\alpha_0, x\rangle})
        \]
        is nonnegative on $\R^n$ and a similar results holds for polynomials.
        To simplify notation, we abbreviate polynomials and signomials shortly
        as \emph{forms}.
 
Since sums of nonnegative forms are nonnegative as well, this basic idea
defines certain convex cones of nonnegative forms. For signomials with support set $\mathcal{T}$,
that cone is denoted as the SAGE cone
$C_{\sage}(\mathcal{T})$
(\emph{Sums of Arithmetic-Geometric Exponentials} \cite{chandrasekaran-shah-2016}) 
and for polynomials, it is denoted as the SONC cone
$C_{\sonc}(\mathcal{T})$
(\emph{Sums of Nonnegative Circuits} \cite{iliman-dewolff-resmathsci}). 
These nonnegativity certificates enrich and can be combined with
other nonnegativity certificates such as sums of squares in the polynomial setting.
In optimization,
the cones based on the AM/GM inequality
can be used to determine lower bounds of signomials (and polynomials) through
\[
  f^{\sage} \ = \ \sup \{ \lambda \in \R \ : \ f - \lambda \in C_{\sage}(\cT)\},
\]
which can be numerically computed using relative entropy programming.
These techniques rely on the fact that every SONC form $p$ (and similarly,
SAGE forms) can be written as a sum of nonnegative circuit 
polynomials supported on the support of $p$ 
(\cite{wang-nonneg}, see also \cite{mcw-2021, Papp2023}).
The AM/GM techniques can also be extended to constrained settings
(\cite{dressler-murray-2022,mcw-partial-dualization,mnt-2023,theobald-entropy}).
For the second-order representability of the SAGE cone and 
the SONC cone see \cite{averkov-2019,Magron2023,NaumannTheobald2020}
and for combining the SONC cone with the cone of sums of squares
see \cite{dks-2023}.

So far,
only few theoretical results are known concerning when the bounds are 
exact and how good are the bounds when they are not exact.
Concerning exactness, Wang~\cite{wang-nonneg} presented a class of
polynomials with several negative terms, for which the SONC bound 
coincides with the true minimum. A main case of this class is a
Newton simplex whose supports of the negative terms are 
contained in the interior of the simplex, see
also \cite{iliman-dewolff-resmathsci, mcw-2021} for the characterization
of this class. 
Moreover, in \cite[Theorem 4.1]{wang-nonneg}
a generalization of that main class is given.

In many related areas, the use of symmetries is a 
key technique to extend the scope of applicability of methods 
(see, for example, \cite{cgs-2012,gatermann-parrilo-2004, ksv-2017,
moustrou2019symmetric,Moustrou2023,rtjl-2013,rodriguez-hubert-2021}).
In the current paper, we
study symmetric SAGE and SONC forms. For
the cone of sums of squares, symmetry has been studied 
in \cite{blekherman-riener-2021}. In \cite{mnrtv-2022}, it was initiated to exploit 
symmetries in the computation of the SAGE and SONC lower bounds for 
linear group actions of a group $G$ on $\R^n$.
Depending on the group $G$, this can lead to large
gains in computation time. For the special case of the symmetric group,
Heuer, de Wolff and Tran~\cite{htw-2022} gave an alternative derivation of 
some of the results using a generalized Muirhead inequality.

The goal of the current paper is to provide theoretical results
on the structure and on the quantitative aspects concerning the cones
of symmetric SAGE and SONC forms and on the SAGE relaxations.  On the
one hand, this is motivated by the question to understand further the
symmetry reduction for AM/GM-based optimization.
On the other hand, symmetry provides a natural framework to tackle
the exactness question and the quantitative questions mentioned above,
thus enabling to provide new non-trivial classes of signomials and polynomials 
for which
exactness results or exact quantitative gaps can be shown.

Even if many of the results in the sequel apply for more general linear group actions, for the sake of simplicity, we restrict our attention to the most natural case, that is the action of the symmetric group by permutation. At the core of our results is a symmetric decomposition of the SAGE and SONC forms
in the symmetric cones $C^\S_\sage(\cT)$ and $C^\S_\sonc(\cT)$,
see Proposition~\ref{prop:decomp1} in Section~\ref{se:prelim} below, and the fact  that the fixed points of this action constitutes a one-dimensional subspace, which we call the \emph{diagonal}.

\smallskip

\noindent
{\bf Our contributions.} 

1. As a starting point, we characterize the structure of symmetry-induced circuit 
 decompositions and
 the structure of the zeroes of symmetric SAGE and SONC forms with respect to
 the symmetric group.
  These results on the zeroes provide symmetric analogs of the characterizations of 
  the zeroes in \cite{dressler-2020} and \cite{FdW-2022}.
      Our treatment departs from the known result that the zero set of 
	a SAGE signomial constitutes a subspace and is therefore convex and 
	that every SONC polynomial with a finite number of zeroes has at most
	one zero in the positive orthant.
	
	In sharp contrast to this, 
	for the rather structured class of SONC polynomials and
	SAGE signomials, the minimal solutions of symmetric optimization problems
	are in general not symmetric. We say that these functions have the minimum
	\emph{outside of the diagonal}, see 
	Example~\ref{ex:minimizers-non-diagonal}.

2.     The symmetric decomposition in \cite{mnrtv-2022}
	raised the natural question whether a symmetric version of 
	Wang's result applies for certain classes of symmetric polynomials.
	We show in Theorem~\ref{th:class1} that such a symmetric generalization
	holds for a class with one orbit of exterior and several orbits of 
	interior points. For this class, we have SAGE exactness and we can 
	explicitly characterize the minimizer of such a polynomial or signomial
	in terms of the unique positive zero of a univariate signomial.
 
 	3. We provide several quantitative results concerning the question
 	how far is the notion of being SAGE or SONC from being nonnegative.
 
	(a) We classify the difference of SAGE polynomials to
	nonnegative polynomials for symmetric quadratic forms.

	(b) We prove that already in a very restricted setting
	of quartic polynomials with two interior support points in the
	Newton polytope, the cone of symmetric
	SONC polynomials differs from the cone of all symmetric polynomials with
	that support. See Theorem~\ref{th:gap}. Moreover, for the underlying
	parametric class of quartic polynomials, we give a full characterization
	of the SONC/SAGE bounds and the true minima.
	
    4.  We give a detailed study of SONC certificates in the context of monomial symmetric inequalities. On the one hand, we show that the normalized setup of such inequalities can be well certified with SONC certificates.  We study this phenomenon especially in the case of the classical Muirhead inequality, which as we show can be seen as a SONC certificate. Based on this observation we also give a slight generalization of this classical inequality.   On the other hand, we demonstrate a significant disparity between the capability of SONC and the potential of sums of squares  in certifying the nonnegativity of symmetric inequalities which are not normalized.

The paper is structured as follows. Section~\ref{se:prelim} collects
background on the SAGE and the SONC cone and symmetry techniques.
Section~\ref{sec:zeroes} presents structural results on the zeroes
of symmetric SAGE and SONC forms with respect to the symmetric group.
In Section~\ref{sec:comparison}, we compare the symmetric
SAGE cone and the symmetric SONC cone with the
symmetric nonnegative cone. Section~\ref{se:mean-inequalities} 
deals with
monomial symmetric inequalities and mean inequalities.

\medskip	

\noindent
	{\bf Acknowledgement.}
The authors are grateful for the feedback obtained in the referee process,  which helped to improve the paper.

\section{Preliminaries\label{se:prelim}}
	Throughout the article, we use the notation $\N=\{0,1,2,3,\ldots\}$.
	For a finite subset $\cX\subset \R^n$, let $\R^\cX$ be the set of 
	$|\cX|$-tuples whose components are indexed by the set $\cX$. 
	We denote by $\langle \cdot , \cdot \rangle$ the standard Euclidean inner product in $\R^n$.
	
	\subsection{The SAGE and the SONC cone} For a given non-empty finite set $\mathcal{T} \subset \R^n$, the SAGE cone refers to signomials
supported on $\mathcal{T}$. Formally, the SAGE cone
	$C_{\mathrm{SAGE}}(\cX)$ is defined as
	\[
	C_{\mathrm{SAGE}}(\cX) := \sum_{\beta \in \cX}
	C_{\text{AGE}}(\cX \setminus \{\beta\},\beta),
	\]
	where for $\mathcal{A} := \mathcal{T} \setminus \{\beta\}$
	\[
	C_{\mathrm{AGE}}(\cA,\beta) := \Big\{ f(x) = 
	\sum\limits_{\alpha\in\cA} c_\alpha e^{\langle \alpha, x \rangle}
	+ c_\beta e^{\langle \beta, x \rangle} \ : \
	c_{\alpha} \geqslant  0 \text{ for } \alpha \in \cA, \, c_{\beta} \in \R, \,
	f\geqslant  0 \text{ on } \R^n\Big\}
	\]
	denotes the nonnegative signomial which may only have a negative coefficient in the
	term indexed by $\beta$
	(see \cite{chandrasekaran-shah-2016}). 
	The elements in these cones are called \emph{SAGE signomials} and 
	\emph{AGE signomials}, respectively. If 
	$f \in C_\mathrm{AGE}(\cA, \beta)$ and $\cA$ and $\beta$ are clear
	from the context, we write in brief just \emph{$f$ is AGE}.
	Similarly, but only for
	$\mathcal{T} \subset \N^n$, define
	$C_{\mathrm{SONC}}(\cX)$ as
	\[
	C_{\mathrm{SONC}}(\cX) := \sum_{\beta \in \cX}
	C_{\mathrm{AG}}(\cX \setminus \{\beta\},\beta),
	\]
	where for $\mathcal{A} := \mathcal{T} \setminus \{\beta\}$
	\begin{eqnarray*}
	C_{\mathrm{AG}}(\cA,\beta) & := & \Big\{ f(x) = 
	\sum\limits_{\alpha\in\cA} c_\alpha x^{\alpha}
	+ c_\beta x^{\beta} \ : \
	c_{\alpha} \geqslant  0 \text{ for } \alpha \in \cA, \, c_{\beta} \in \R, \, \\
	& & \qquad c_{\gamma} = 0 \text{ for all }
	\gamma \in \cA \text { with }
	\gamma \not\in (2 \N)^n, \, f\geqslant  0 \text{ on } \R^n\Big\}
	\end{eqnarray*}
	denotes the nonnegative polynomials which may only have a negative coefficient in the
	term indexed by $\beta$. 
	The elements in these cones are called \emph{SONC polynomials} and 
	\emph{AG polynomials}, where the acronym SONC
	comes from the circuit decompositions discussed further 
	below \cite{iliman-dewolff-resmathsci}
	and the equivalence of the definition given here was shown
	in \cite{mcw-2021,wang-nonneg}.
	Note that $C_{AG}(\cA, \beta)$ refers to polynomials, whereas 
	$C_{AGE}(\cA,\beta)$ refers
	to signomials.
	The cones
	$C_{\sage}(\mathcal{T})$ and $C_{\sonc}(\mathcal{T})$
	are closed convex cones in $\R^{\mathcal{T}}$ (see~\cite[Proposition~2.10]{knt-2021}).
	Membership in the convex cones can be decided in terms of 
	relative entropy programming
	\cite{mcw-2021}, see also \cite{knt-2021} or \cite{mnrtv-2022}.
	
\subsection{Circuit decompositions}

A simplicial circuit is a non-zero vector $\nu \in 
\R^{\mathcal{T}}$, whose positive support (denoted by $\nu^+$)
is affinely independent,
whose components sum to zero and whose unique negative support 
element $\beta$ satisfies 
$(\sum_{\alpha \in \nu^+} \nu_{\alpha})\beta  
= \sum_{\alpha \in \nu^+} \nu_{\alpha} \alpha$.
A simplicial circuit is normalized when the
nonnegative components sum to $1$, and hence the negative component is $-1$.
Let $\Lambda(\cT)$ denote the set of normalized simplicial circuits 
of $\mathcal{T}$.
In geometric terms, a normalized circuit $\lambda \in \Lambda(\cT)$ can be interpreted as the barycentric coordinates of $\lambda^-=\beta$ in terms of the vectors in $\lambda^+$.

Murray, Chandrasekaran and Wierman \cite{mcw-2021} have shown
the following circuit decomposition theorem for the SAGE cone (see also 
Wang \cite{wang-nonneg} for the variant regarding the SONC variant).

\begin{prop}
  \label{prop:decomp1}
    The cone $C_{\mathrm{SAGE}}(\cT)$ decomposes as the finite Minkowski 
    sum
    \begin{equation}
        C_{\mathrm{SAGE}}(\cT) = \sum_{\lambda \in \Lambda(\cT)}  
        C_{\mathrm{SAGE}}(\cT,\lambda)
           + \sum_{\alpha\in\cA}\R_+\cdot  \exp(\langle \alpha, x \rangle),
    \end{equation}
     where $C_{\mathrm{SAGE}}(\cT,\lambda)$ denotes the
  $\lambda$-witnessed cone, that is, with $\beta := \lambda^{-}$,
  \[
    C_{\mathrm{SAGE}}(\cT,\lambda) = \left\{
       \sum_{\alpha \in \cT} c_{\alpha} \exp(\langle \alpha, x \rangle) \, : \,
       \prod_{\alpha \in \lambda^+}
       \left( \frac{c_{\alpha}}{\lambda_{\alpha}} \right)^{\lambda_{\alpha}}
       \ge - c_{\beta}, \; c_{\alpha} \ge 0 \text{ for }
       \alpha \in \cT \setminus \{ \beta\} \right\}.
   \]
\end{prop}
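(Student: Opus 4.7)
The plan is to prove the two inclusions separately.

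For the inclusion $\supseteq$, I would verify that each summand on the right-hand side lies in $C_{\sage}(\cT)$. Each term $\R_+ \cdot \exp(\langle \alpha, x\rangle)$ is trivially SAGE, being a nonnegative multiple of a single exponential. For the $\lambda$-witnessed cones, with $\beta = \lambda^-$, the weighted AM/GM inequality applied with the weights $\lambda_\alpha$ on $\lambda^+$ yields
\[
\sum_{\alpha \in \lambda^+} c_\alpha \exp(\langle \alpha, x\rangle) \ \geq\ \prod_{\alpha \in \lambda^+}\! \left(\frac{c_\alpha}{\lambda_\alpha}\right)^{\!\lambda_\alpha} \exp\!\Big(\Big\langle \sum_{\alpha \in \lambda^+} \lambda_\alpha \alpha, x\Big\rangle\Big)\ =\ \prod_{\alpha \in \lambda^+}\! \left(\frac{c_\alpha}{\lambda_\alpha}\right)^{\!\lambda_\alpha} \exp(\langle \beta, x\rangle),
\]
where I used $\sum_\alpha \lambda_\alpha\alpha = \beta$ in the last step. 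The defining inequality $\prod (c_\alpha/\lambda_\alpha)^{\lambda_\alpha} \geq -c_\beta$ of the $\lambda$-witnessed cone then certifies that the signomial is nonnegative, hence AGE with negative term at $\beta$, hence SAGE.

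For the inclusion $\subseteq$, I would first unfold the definition $C_{\sage}(\cT) = \sum_\beta C_{\mathrm{AGE}}(\cT \setminus \{\beta\}, \beta)$ so that it suffices to decompose a single AGE signomial $g = \sum_{\alpha \in \cA} c_\alpha \exp(\langle \alpha, x\rangle) + c_\beta \exp(\langle \beta, x\rangle)$ with $\cA = \cT \setminus \{\beta\}$. If $c_\beta \geq 0$, then $g$ is already a nonnegative combination of single exponentials. Otherwise, nonnegativity forces $\beta \in \conv\{\alpha \in \cA : c_\alpha > 0\}$, since a separating affine functional would otherwise produce a direction along which the dominating $c_\beta$-term drives $g$ to $-\infty$. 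Applying Carathéodory's theorem, I can choose an affinely independent subset $S \subseteq \{\alpha \in \cA : c_\alpha > 0\}$ and nonnegative weights $(\mu_\alpha)_{\alpha \in S}$ summing to one with $\sum \mu_\alpha \alpha = \beta$; this produces a normalized simplicial circuit $\lambda \in \Lambda(\cT)$ whose positive support lies in the support of $g$ and whose negative support is $\beta$.

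The remaining task is to distribute $g$ across such circuits. I would proceed iteratively: given the current residual AGE signomial, I select a simplicial circuit $\lambda$ obtained as above, extract the largest scalar multiple of a $\lambda$-witnessed element compatible with the current coefficients (saturating either the entropy-type constraint $\prod(c_\alpha/\lambda_\alpha)^{\lambda_\alpha} = -c_\beta$ or some nonnegativity $c_\alpha \ge 0$), and continue on the residual, which stays AGE by construction. At each step either one positive coefficient drops to zero, or the negative coefficient is entirely absorbed, so the process terminates in finitely many steps and leaves a remainder consisting only of nonnegative exponentials.

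The main obstacle is making precise that the iterative extraction always yields a feasible $\lambda$-witnessed summand with a strictly decreasing complexity measure, so that the process terminates and covers $g$. The cleanest resolution is via the relative entropy dual characterization of $C_{\mathrm{AGE}}(\cA,\beta)$ from \cite{mcw-2021}: one shows that the extreme rays of this convex cone are either single nonnegative exponentials or $\lambda$-witnessed AGE signomials for some $\lambda \in \Lambda(\cT)$ with $\lambda^- = \beta$. The Minkowski decomposition then follows by expressing any element of the cone as a nonnegative combination of its extreme rays, which is legitimate because the cone is closed and pointed after quotienting by the exponential lineality directions.
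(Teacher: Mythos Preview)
The paper does not give its own proof of this proposition: it is quoted as a known result from Murray, Chandrasekaran and Wierman \cite{mcw-2021} (with the SONC analogue attributed to Wang \cite{wang-nonneg}), and no argument is supplied in the paper itself. So there is nothing in the paper to compare your proposal against.

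On the substance of your proposal: the easy inclusion via weighted AM/GM is correct as written. For the hard inclusion, your iterative extraction sketch is the right intuition but, as you yourself note, the termination and the fact that the residual remains AGE are not immediate from what you wrote; one genuinely needs the extreme-ray characterization of $C_{\mathrm{AGE}}(\cA,\beta)$ (or an equivalent relative-entropy duality argument) to close the gap, and that is exactly the content of \cite{mcw-2021}. Since you end up invoking that reference anyway, your proposal is in effect a correct outline of the standard proof rather than an independent argument.
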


Since the SONC setting refers to nonnegativity of polynomials on
the whole space $\R^n$,
the circuit concept has to be slightly adapted.
Namely, in the definition of a circuit we have to add the requirements
that support vectors in $\lambda^+$ and $\lambda^-$ 
have nonnegative integer coordinates and that
the vectors in $\lambda^+$ can only have even coordinates.
	See \cite{knt-2021} for an exact characterization of the extreme
	rays of $C_\sage(\cT)$ and $C_\sonc(\cT)$ in terms of the circuits.

 For disjoint sets $\emptyset\ne\mathcal{A}\subset \R^n$ and 
 $\cB\subset \R^n$,
  it is convenient to denote by 
	\begin{equation}
	\label{eq:sage-a-b}
	C_{\mathrm{SAGE}}(\cA,\cB) := \sum_{\beta \in \cB}
	C_{\mathrm{AGE}}(\cA\cup\cB\setminus\{\beta\} ,\beta)
	\end{equation}
	the \textit{signed SAGE cone}, which allows negative coefficients only
	in a certain subset $\cB$ of the support $\cA\cup\cB$ (see, e.g.,
	\cite{iliman-de-wolff-2016-siopt,mcw-2021}).
	
Finally, in the case where all the exponent vectors have nonnegative coordinates, the decomposition in Proposition~\ref{prop:decomp1} can be refined with some further information on the possible positive coefficients used in the decomposition for a given $\beta$.
For $\alpha \in \R^n$, we introduce its support 
\[\supp(\alpha) = \{i \in \{1, \ldots, n\} \ : \ \alpha_i \neq 0\}.\]
Then we have:

\begin{prop}\label{lem:support}
Let $f(x)= \sum_{\alpha \in \cA} c_\alpha e^{\alpha x} - d e^{\beta x}$ where  $c_\alpha\geqslant 0$ and $d>0$. Assume that every
$\alpha \in \cA$ is a nonnegative vector
and set $\cA' = \{\alpha \in \cA \ : \ \supp(\alpha) \subset \supp(\beta)\}$. Then \[f \textrm{ is AGE} \ \Leftrightarrow \ \tilde{f}(x)=\sum_{\alpha \in \cA'} c_\alpha e^{\alpha x} - d e^{\beta x} \textrm{ is AGE}. 
\]
\end{prop}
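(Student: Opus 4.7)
I will prove the two directions separately, with the forward implication being the substantive one.

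For the implication $\tilde{f}\text{ AGE}\Rightarrow f\text{ AGE}$, I simply observe that
\[
f(x)-\tilde{f}(x)=\sum_{\alpha\in\cA\setminus\cA'}c_\alpha e^{\langle\alpha,x\rangle}\;\geq\;0
\]
because all $c_\alpha\geq 0$. Hence $f\geq\tilde{f}\geq 0$ on $\R^n$. Moreover $f$ has a negative coefficient only in the $\beta$-term, so $f$ is AGE.

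For the reverse implication, the idea is a limit argument that annihilates the contributions of $\cA\setminus\cA'$ while leaving $\tilde{f}$ unchanged. Fix $x\in\R^n$ and, for $t>0$, define $y(t)\in\R^n$ by setting $y(t)_i=x_i$ for $i\in\supp(\beta)$ and $y(t)_i=x_i-t$ for $i\notin\supp(\beta)$. Three observations drive the argument:
\begin{itemize}
\item since $\beta_i=0$ for $i\notin\supp(\beta)$, one has $\langle\beta,y(t)\rangle=\langle\beta,x\rangle$;
\item for $\alpha\in\cA'$, the condition $\supp(\alpha)\subset\supp(\beta)$ forces $\alpha_i=0$ for $i\notin\supp(\beta)$, so $\langle\alpha,y(t)\rangle=\langle\alpha,x\rangle$;
\item for $\alpha\in\cA\setminus\cA'$, there exists $j\in\supp(\alpha)\setminus\supp(\beta)$, i.e.\ $\alpha_j>0$ and $\beta_j=0$. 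Combined with the standing hypothesis that every $\alpha\in\cA$ has nonnegative coordinates, this yields $\langle\alpha,y(t)\rangle=\langle\alpha,x\rangle-t\sum_{i\notin\supp(\beta)}\alpha_i\to-\infty$ as $t\to\infty$, so $e^{\langle\alpha,y(t)\rangle}\to 0$.
\end{itemize}

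Combining the three bullets, $f(y(t))\to\tilde{f}(x)$ as $t\to\infty$. Since $f\geq 0$ on $\R^n$ by hypothesis, we conclude $\tilde{f}(x)\geq 0$. As $x$ was arbitrary and $\tilde{f}$ clearly has its only possibly negative coefficient at $\beta$, $\tilde{f}$ is AGE. The step where some care is needed is simply verifying the three bullets, which crucially relies on the assumption that the exponent vectors in $\cA$ are nonnegative (otherwise shifting some coordinates to $-\infty$ could make unwanted terms blow up); beyond that the argument is essentially one-line.
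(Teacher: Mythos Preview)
Your argument is correct. Both directions are handled cleanly, and you correctly identify where the nonnegativity hypothesis on the exponent vectors in $\cA$ is used (to guarantee that $\sum_{i\notin\supp(\beta)}\alpha_i>0$ for $\alpha\in\cA\setminus\cA'$, so the corresponding term vanishes rather than blowing up).

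Your route differs from the paper's. The paper argues structurally via the circuit decomposition of the AGE cone (Proposition~\ref{prop:decomp1}): if $f$ is AGE, then in any decomposition into circuit signomials, each circuit $\lambda$ with $\lambda^-=\beta$ satisfies $\sum_{\alpha\in\lambda^+}\lambda_\alpha\alpha=\beta$; reading off the $i$-th coordinate for $i\notin\supp(\beta)$ and using $\alpha_i\ge 0$ forces $\alpha_i=0$ for all $\alpha\in\lambda^+$, i.e.\ $\lambda^+\subset\cA'$. Hence the same circuit decomposition certifies that $\tilde f$ is AGE. Your limit argument, by contrast, is completely self-contained: it works directly from the definition of AGE as ``nonnegative with at most one negative coefficient'' and never invokes the circuit machinery. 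The trade-off is that the paper's proof yields slightly more information (any circuit certificate for $f$ is already a certificate for $\tilde f$), whereas your approach is more elementary and would transfer verbatim to settings where a circuit decomposition theorem is not available.
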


\begin{proof}
One direction is obvious. Suppose now that $f$ is AGE, and let $\lambda$ be a normalized simplicial circuit appearing in the decomposition. Since \[\sum_{\alpha \in \lambda^+} \lambda_\alpha \alpha = \beta,\]
for $i\notin \supp (\beta)$, we must have $\sum_{\alpha \in \lambda^+} \lambda_\alpha \alpha_i = 0$, which forces $\alpha_i = 0$ because by assumption $\alpha_i \geq 0$ for every $\alpha\in \lambda^+$. 
\end{proof}
	
	\subsection{Optimizing over the SAGE and SONC cones}

	Since the SAGE cone is contained in the cone of nonnegative signomials, relaxing to the SAGE cone gives an approximation of the global infimum 
	$f^*$ of a signomial $f$ supported on $\mathcal{T}$: 
	\[
	f^{\mathrm{SAGE}} = \sup \{ \lambda \in \R \, : \ f - \lambda \in C_{\mathrm{SAGE}}(\mathcal{T}) \}
	\]  
	satisfying $f^{\mathrm{SAGE}}\leqslant f^*$.

Under some natural conditions, one can see that $f^{\mathrm{SAGE}}$ is finite. 
More precisely, following the ideas of \cite[Theorem 15]{mcw-2021}, one can prove:

	\begin{prop}\label{prop:finite}
		Let 	\[	f(x) = \sum_{\alpha \in \mathcal{A}} c_{\alpha} \exp(\langle \alpha, x \rangle) 	+ \sum_{\beta \in \mathcal{B}} c_{\beta} \exp(\langle \beta, x \rangle)	\]	with $c_{\alpha} > 0$ for $\alpha \in \mathcal{A}$.
		Assume $\mathcal{B} \subset \relinter(\conv(\cA \cup \{(0,\ldots,0)^T\})).$ 		Then $f^{\mathrm{SAGE}}>-\infty.$ 
		
	\end{prop}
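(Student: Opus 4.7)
The plan is to construct, for a finite constant $\mu>0$, an explicit decomposition of $f+\mu$ as a sum of AGE signomials; this immediately yields $f^{\sage} \geqslant -\mu > -\infty$. Let $\cB_- := \{\beta \in \cB : c_\beta < 0\}$ be the set of support points with strictly negative coefficient. For $\alpha \in \cA$ (with $c_\alpha>0$) and for $\beta \in \cB \setminus \cB_-$, the corresponding terms are nonnegative scalar multiples of a single exponential and therefore already lie in $C_{\sage}$; only the terms $c_\beta \exp(\langle \beta, x \rangle)$ with $\beta \in \cB_-$ require an AGE compensation.

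For each $\beta \in \cB_-$, the relative-interior hypothesis provides strictly positive weights $\lambda^{(\beta)}_\alpha > 0$ for $\alpha \in \cA$ together with $\lambda^{(\beta)}_0 > 0$, summing to $1$ and satisfying $\sum_{\alpha \in \cA} \lambda^{(\beta)}_\alpha\, \alpha = \beta$. I would distribute each positive coefficient $c_\alpha$ evenly among the $\beta \in \cB_-$ by setting $s^{(\beta)}_\alpha := c_\alpha/|\cB_-|$. The scaled weighted AM/GM inequality then produces
\[
\sum_{\alpha \in \cA} s^{(\beta)}_\alpha \exp(\langle \alpha, x \rangle) + s^{(\beta)}_0 \;\geqslant\; \left(\prod_{\alpha \in \cA} \left(\frac{s^{(\beta)}_\alpha}{\lambda^{(\beta)}_\alpha}\right)^{\lambda^{(\beta)}_\alpha}\right) \left(\frac{s^{(\beta)}_0}{\lambda^{(\beta)}_0}\right)^{\lambda^{(\beta)}_0} \exp(\langle \beta, x \rangle).
\]
Since $\lambda^{(\beta)}_0 > 0$, the constant $s^{(\beta)}_0$ can be chosen finite yet large enough that the prefactor on the right reaches $|c_\beta|$. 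With such a choice, the signomial
\[
g_\beta(x) := \sum_{\alpha \in \cA} s^{(\beta)}_\alpha \exp(\langle \alpha, x \rangle) + s^{(\beta)}_0 + c_\beta \exp(\langle \beta, x \rangle)
\]
is nonnegative on $\R^n$, and as its only potentially negative coefficient sits at $\beta$, it is AGE.

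Summing the $g_\beta$ over $\beta \in \cB_-$ and adding the remaining nonnegative terms yields
\[
f(x) + \mu \;=\; \sum_{\beta \in \cB_-} g_\beta(x) + \sum_{\beta \in \cB \setminus \cB_-} c_\beta \exp(\langle \beta, x \rangle),
\]
where $\mu := \sum_{\beta \in \cB_-} s^{(\beta)}_0 < \infty$ and the $\cA$-coefficients balance by the identity $\sum_{\beta \in \cB_-} s^{(\beta)}_\alpha = c_\alpha$. This exhibits $f + \mu$ as a sum of AGE signomials and pure nonnegative exponentials, so $f + \mu \in C_{\sage}$ and hence $f^{\sage} \geqslant -\mu > -\infty$.

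The only real obstacle is ensuring that $\mu$ can be taken finite, and this is precisely where the \emph{relative interior} assumption is used: it guarantees $\lambda^{(\beta)}_0 > 0$, which lets $s^{(\beta)}_0$ serve as a free parameter able to absorb any magnitude of $|c_\beta|$. If $\beta$ lay only in $\conv(\cA)$ (so $\lambda^{(\beta)}_0 = 0$), the AM/GM prefactor would be rigidly fixed at $\prod_\alpha (s^{(\beta)}_\alpha/\lambda^{(\beta)}_\alpha)^{\lambda^{(\beta)}_\alpha}$ and arbitrarily large negative coefficients could not be compensated by a constant offset within $C_{\sage}$.
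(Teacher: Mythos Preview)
The paper does not actually supply its own proof of this proposition; it only states that the result can be obtained ``following the ideas of \cite[Theorem~15]{mcw-2021}''. Your argument is a correct, self-contained, and constructive proof: you distribute each positive coefficient $c_\alpha$ evenly among the negatively-signed support points, and for each $\beta \in \cB_-$ you build an explicit AGE signomial $g_\beta$ via the weighted AM/GM inequality. The key observation---that the relative-interior hypothesis forces $\lambda_0^{(\beta)} > 0$, which in turn lets the free constant $s_0^{(\beta)}$ be chosen large enough so that the AM/GM prefactor reaches $|c_\beta|$---is exactly the mechanism that drives the result, and your closing paragraph isolates it cleanly. What you have done is essentially unwind the circuit-number characterization of AGE membership (Proposition~\ref{prop:decomp1}) to first principles; this is very much in the spirit of the cited reference.

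One cosmetic point: your definition $s^{(\beta)}_\alpha = c_\alpha/|\cB_-|$ silently assumes $\cB_- \neq \emptyset$. When $\cB_- = \emptyset$ the signomial $f$ is already a sum of nonnegative exponentials, hence $f \in C_{\sage}$ and $f^{\sage} \geqslant 0$; it is worth a half-sentence to dispose of this trivial case explicitly.
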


The finiteness of $f^{\mathrm{SAGE}}$ in
	Proposition~\ref{prop:finite}
	can be seen as an advantage with respect to the sum of squares analogue $f^{\mathrm{SOS}}$. Indeed, the Motzkin polynomial $f=x^4 + y^4+ x^2 + y^2 - 3x^2y^2 +1 $ satisfies $f^{\mathrm{SOS}}=-\infty$ while $f^{\mathrm{SAGE}}= f^*=0$.
	
	\begin{remark1} When $\beta \not \in \conv(\cA \cup\{(0,\ldots,0)^T\})$, the hyperplane separation theorem implies $\inf f = -\infty$, forcing $f^{\mathrm{SAGE}} = -\infty$. If $\beta$ is on the boundary of  $\conv(\cA \cup\{(0,\ldots,0)^T\})$, then we cannot conclude in general. For example, consider the function \[f(x,y)=\mu + e^{2x} + e^{2y} - \delta e^{x+y}.\] 
	Then $f^{\mathrm{SAGE}} = \mu$ when $\delta \leqslant 2$, while $f^{\mathrm{SAGE}} = -\infty$ when $ \delta >2$, 
	and in both cases, one can easily sees that $f^{\mathrm{SAGE}} = f^*$.
	\end{remark1}
			
	In the same spirit, in the corresponding setting for polynomials, we can define $f^{\mathrm{SONC}}$ as
	\[
	f^{\mathrm{SONC}}\ = \ \sup \{ \lambda \in \R \, : \ f - \lambda \in C_{\mathrm{SONC}}(\mathcal{A}) \}.
      \]
      
\subsection{Relations between SAGE and SONC}\label{ssec:SAGEvsSONC}      
      
Since the two notions come from the arithmetic mean/geometric mean inequality, SONC and SAGE forms are closely related, and most of the statements for SAGE forms can be transferred to the SONC setting, following \cite{mcw-2021}:
For a polynomial $f = \sum_{\alpha \in \mathcal{A}} c_{\alpha} x^\alpha$, with $\cA \subset \N^n$, let
\[
\sig(f) := f(\exp(y_1),\ldots,\exp(y_n)) = \sum_{\alpha \in \mathcal{A}} c_{\alpha} \exp(\langle \alpha, y \rangle)
\]
be the the signomial associated with $f$.
Studying $\sig(f)$ on $\R^n$ is equivalent to studying $f$ on the positive open orthant $\{x \in \R^n : x_i >  0, \, 1 \le i \le n\}$.
In general, for $\omega \in \{\pm 1\}^n$, studying the restriction of $f$ to the open orthant $\{x \in \R^n : \omega_i x_i >  0, \, 1 \le i \le n\}$ boils down to studying the signomial $\sig (f^\omega)$, where  
\[f^\omega(x) = f(\omega_1 x_1, \ldots, \omega_n x_n).\] 
Finally, we define
\[		\tilde{f}(x) = \sum_{\alpha \in \mathcal{A}\cap (2\N)^n} c_{\alpha} x^\alpha
		- \sum_{\gamma \in \mathcal{A}\setminus (2\N)^n} |c_{\gamma}| x^\gamma,
		\]
and we call $f$ \emph{orthant-dominated} if there is 
some $\omega\in \{\pm 1\}^n$ such that $f^\omega = \tilde{f}$. 	
In this case, $f$ is nonnegative on $\R^n$ if and only if $\tilde{f}$ is nonnegative on the positive orthant, namely if and only if $\sig(\tilde{f})$ is nonnegative on $\R^n$. 
In general, we only have $\sig(\tilde{f}) \leq \sig(f^\omega)$ for every $\omega \in \{\pm 1\}^n$. 
	
According to \cite{mcw-2021}, the polynomial $f$ admits a SONC certificate if and only if the signomial $\sig(\tilde{f})$ admits a SAGE certificate. 
From an optimization point of view, this discussion can be summed up in the following proposition:

\begin{prop}
\label{pr:sage-sonc1}
Let $f(x) = \sum_{\alpha} c_{\alpha} x^{\alpha}$ be a polynomial. 
Then
  \[
    f^{\mathrm{SONC}} = \sig(\tilde{f})^{\mathrm{SAGE}} \leq \min_{\omega \in \{-1,1\}^n}
    \sig(f^{\omega})^{\mathrm{SAGE}} \leq f^*,
  \]
where the first inequality is an equality when $f$ is orthant-dominated.
\end{prop}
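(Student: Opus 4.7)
The plan is to establish the three parts of the display sequentially, relying on the SONC/SAGE correspondence recalled from \cite{mcw-2021} together with elementary coefficient comparisons between $\sig(\tilde{f})$ and the signomials $\sig(f^{\omega})$.

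For the first equality, I would apply the correspondence of \cite{mcw-2021} to each shifted polynomial $f-\lambda$. Since $0 \in (2\N)^n$, the constant term of $f-\lambda$ is treated by the $\tilde{\,\cdot\,}$ operator exactly as the other even-exponent terms, so $\widetilde{f-\lambda} = \tilde{f} - \lambda$. Consequently $f-\lambda \in C_{\sonc}$ if and only if $\sig(\tilde{f}) - \lambda \in C_{\sage}$, and taking the supremum over $\lambda$ on both sides yields $f^{\sonc} = \sig(\tilde{f})^{\sage}$.

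For the middle inequality, I fix $\omega \in \{\pm 1\}^n$ and compare coefficient by coefficient. On exponents $\alpha \in \cA \cap (2\N)^n$ the signomials $\sig(\tilde{f})$ and $\sig(f^{\omega})$ share the coefficient $c_{\alpha}$ (since $\omega^{\alpha}=1$), while on $\gamma \in \cA \setminus (2\N)^n$ the respective coefficients are $-|c_{\gamma}|$ and $\omega^{\gamma} c_{\gamma} \in \{\pm|c_{\gamma}|\}$. Hence the difference $\sig(f^{\omega}) - \sig(\tilde{f})$ is a nonnegative combination of exponentials, and in particular lies in $C_{\sage}$. Adding such a decomposition to any SAGE certificate of $\sig(\tilde{f})-\lambda$ produces a SAGE certificate of $\sig(f^{\omega})-\lambda$, yielding $\sig(\tilde{f})^{\sage} \leq \sig(f^{\omega})^{\sage}$ for every $\omega$, and therefore the inequality with the minimum. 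In the orthant-dominated case one may pick $\omega_0$ with $f^{\omega_0}=\tilde{f}$, so that $\sig(f^{\omega_0})=\sig(\tilde{f})$ realizes the minimum, forcing the middle inequality to be an equality.

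For the last inequality, I would use the standard relaxation bound $\sig(f^{\omega})^{\sage} \leq \inf_{y\in\R^n}\sig(f^{\omega})(y)$. The substitution $x_i = \omega_i \exp(y_i)$ identifies this right-hand side with $\inf f$ taken over the open $\omega$-orthant, and minimizing over $\omega$ together with continuity of $f$ to absorb the coordinate hyperplanes yields $f^*$. I do not anticipate a genuine obstacle: the argument is essentially careful sign bookkeeping, and the only substantive point is that the $\tilde{\,\cdot\,}$ operator commutes with the constant shift, which is immediate because $0$ has all even coordinates.
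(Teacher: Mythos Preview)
Your proposal is correct and aligns with the paper's own treatment: the paper does not give a separate proof of this proposition but presents it as a summary of the preceding discussion, which records exactly the three ingredients you use (the SONC/SAGE correspondence from \cite{mcw-2021} for $f-\lambda$, the pointwise inequality $\sig(\tilde{f}) \le \sig(f^{\omega})$, and the identification of $\sig(f^{\omega})$ with the restriction of $f$ to the open $\omega$-orthant). Your write-up simply makes these steps explicit, including the observation that $\widetilde{f-\lambda}=\tilde f-\lambda$ because $0\in(2\N)^n$, which is indeed the only point requiring care.
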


It follows immediately from the definition that $C_\sage(\cT)$ is a 
full-dimensional cone in the vector space of signomials supported by $\cT$. In the SONC case, we need some additional condition on the support.
Using the SONC characterization in terms of the circuit
number \cite{iliman-dewolff-resmathsci} in connection with
Carath\'eodory's Theorem implies:

\begin{prop}\label{prop:fd}
Let $\cT \subset \N^n$ and $\cT^+ = \cT \cap (2\N)^n$. 
Assume that for every $\beta \in \cT \setminus \cT^+$, $\beta\in \conv (\cT^{+} \cup \{0\})$.
Then $C_{\mathrm{SONC}}(\cT)$ is a full-dimensional cone in the space of polynomials supported by $\cT$. 
\end{prop}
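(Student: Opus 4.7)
The plan is to exhibit $|\cT|$ linearly independent elements inside $C_{\sonc}(\cT)$; since $C_{\sonc}(\cT)$ is a convex cone in $\R^\cT$, this forces $\linspan C_{\sonc}(\cT) = \R^\cT$ and hence full-dimensionality (small perturbations of the barycenter of the spanning family remain in the cone).

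For the first family, note that for each $\alpha \in \cT^+$ the monomial $x^\alpha$ has exponent vector in $(2\N)^n$ and is therefore nonnegative on $\R^n$. Hence $x^\alpha \in C_{\mathrm{AG}}(\cT\setminus\{\alpha\},\alpha) \subseteq C_{\sonc}(\cT)$, which immediately supplies $|\cT^+|$ SONC polynomials whose coefficient vectors are the standard basis vectors $e_\alpha$, $\alpha \in \cT^+$.

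For the second family, fix $\beta \in \cT \setminus \cT^+$ and aim to produce an AG polynomial $p_\beta \in C_{\mathrm{AG}}(\cT\setminus\{\beta\},\beta)$ with strictly negative coefficient at $x^\beta$. The hypothesis $\beta \in \conv(\cT^+ \cup \{0\})$ combined with Carathéodory's theorem provides an affinely independent subset $S \subseteq \cT^+\cup\{0\}$ and positive weights $\lambda_\alpha$ with $\sum_{\alpha \in S}\lambda_\alpha = 1$ and $\beta = \sum_{\alpha \in S}\lambda_\alpha \alpha$; this is exactly a simplicial circuit with $\beta$ as its negative support. The weighted AM/GM inequality, applied to $|x_1|,\ldots,|x_n|$ and using that all exponents in $S$ are even, then gives
\[
\sum_{\alpha \in S}\lambda_\alpha x^\alpha \;\ge\; \prod_{\alpha \in S}\bigl(x^\alpha\bigr)^{\lambda_\alpha} \;=\; |x|^\beta \;\ge\; x^\beta \qquad \text{for every } x \in \R^n,
\]
so $p_\beta := \sum_{\alpha \in S}\lambda_\alpha x^\alpha - x^\beta$ is nonnegative and AG.

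Combining the two families gives $|\cT|$ SONC polynomials. The first family spans the coordinate subspace indexed by $\cT^+$, while each $p_\beta$ contributes coefficient $-1$ at $e_\beta$ and $0$ at all other coordinates of $\cT \setminus \cT^+$; hence the full family is linearly independent and spans $\R^\cT$, proving full-dimensionality. The only delicate step is the construction of $p_\beta$ above, which is essentially the classical circuit-number argument of Iliman--de Wolff~\cite{iliman-dewolff-resmathsci}; Carath\'eodory guarantees that the positive support $S$ can be chosen as a simplex of even exponents (with the constant $1$-term entering the certificate precisely when $0 \in S$), ensuring that the resulting circuit polynomial is genuinely supported in $\cT$.
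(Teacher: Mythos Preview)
Your proof is correct and is exactly what the paper sketches in the single sentence preceding the proposition (the circuit-number characterization of Iliman--de Wolff combined with Carath\'eodory). Note only that your final clause about the resulting circuit polynomial being ``genuinely supported in $\cT$'' tacitly assumes $0\in\cT$ whenever $0\in S$; without this the statement itself fails (e.g.\ $\cT=\{1,2\}\subset\N$, where $C_{\sonc}(\cT)=\R_{\ge 0}\cdot x^2$ is one-dimensional), so this is a hidden hypothesis in the paper rather than a defect in your argument.
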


\subsection{The symmetric cones and circuit decompositions}

Finally, we provide the symmetric setup for the SAGE and the SONC cones.
We summarize and revisit the results from \cite{mnrtv-2022}, in particular the symmetric circuit decomposition, see Theorem~\ref{th:symm-circuit-dec}.

The action of the symmetric group $\mathcal{S}_n$ on $\R^n$ by permutation of the coordinates naturally induces an action on the exponent vectors, and therefore on the signomials.
With a small abuse of notation, for $\sigma \in \mathcal{S}_n$, we will denote respectively by $\sigma(x)$ for a variable vector $x\in \R^n$, $\sigma(\alpha)$ for an exponent vector $\alpha \in \R^n$, and by $\sigma f$ for a signomial $f$ the action of $\sigma$ on these elements.
For a discussion on these actions in the more general context of linear actions of finite groups, we refer to  \cite{mnrtv-2022}.
For $\alpha\in \R^n$ an exponent vector, we write $\S_n \cdot \alpha = \{\sigma(\alpha),  \sigma \in \S_n\} $ for the \emph{orbit} of $\alpha$, and $\Stab \alpha := \{\sigma \in \S_n \ : \ \sigma(\alpha) = \alpha\}$ for its \emph{stabilizer}.
Moreover, for a set $S \subset \R^n$, we denote by $\hat{S}$ any \emph{set of orbit representatives for $S$.}

Define, for an $\S_n$-invariant support $\cT$, the cone
$
  C^{\S}_{\mathrm{SAGE}}(\cT) 
$
of $\S_n$-invariant signomials in $C_{\mathrm{SAGE}}(\cT)$. 
The following symmetric decomposition was shown in \cite{mnrtv-2022}. 	
	\begin{prop}\cite{mnrtv-2022}\label{th:symmetric-decomp}
		Let $f$ be an $\S_n$-invariant signomial with $\S_n$-invariant support $\cT = \cA \cup \cB$,
		and $\hat{\cB}$ be a set of orbit representatives for $\cB$.
		Then $f \in C^\S_{\mathrm{SAGE}}(\cA, \cB)$ 
		if and only if for every $\hat{\beta} \in \hat{\mathcal{B}}$,
		there exists an AGE signomial 
		$g_{\hat{\beta}} \in C_{\mathrm{SAGE}}(\cA,\hat{\beta})$ 
		such that
		\begin{equation}\label{eq:symm_decomp}
				f = \sum_{\hat{\beta}\in \hat{\cB}} \sum_{\rho \in \S_n/\Stab(\hbeta)} \rho g_{\hbeta}.	
		\end{equation}
		The functions $g_{\hat{\beta}}$ can be chosen to be invariant under the action of 
		$\Stab(\hat{\beta})$.
	\end{prop}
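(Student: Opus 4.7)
The plan is to establish the nontrivial only-if direction by symmetrizing an arbitrary AGE decomposition of $f$ via the Reynolds operator and then regrouping the result by orbits of the negative support; an additional averaging over $\Stab(\hbeta)$ yields the $\Stab(\hbeta)$-invariance claim. The converse direction is essentially free: for each $\hbeta \in \hat\cB$ and each coset representative $\rho$, the signomial $\rho g_{\hbeta}$ is AGE with negative support at $\rho(\hbeta)\in\cB$, the inner sum in~\eqref{eq:symm_decomp} ranges over the full orbit $\S_n\cdot\hbeta$, and the total is manifestly $\S_n$-invariant.

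For the forward direction, I would start with any AGE decomposition $f = \sum_{\beta\in\cB} h_\beta$ with $h_\beta \in C_{\mathrm{AGE}}((\cA\cup\cB)\setminus\{\beta\},\beta)$ and apply the Reynolds operator $R := \frac{1}{|\S_n|}\sum_{\sigma\in\S_n}\sigma$. Since $f$ is $\S_n$-invariant, $f = Rf = \sum_{\beta\in\cB} Rh_\beta$. Next I partition $\cB$ into $\S_n$-orbits and fix a single orbit $\S_n\cdot\hbeta$; for each $\beta$ in the orbit, choose some $\tau_\beta\in\S_n$ with $\tau_\beta(\hbeta)=\beta$. Using the $\S_n$-invariance of $\cA\cup\cB$, the signomial $\tau_\beta^{-1} h_\beta$ lies in $C_{\mathrm{AGE}}((\cA\cup\cB)\setminus\{\hbeta\},\hbeta)$, and the substitution $\sigma\mapsto\sigma\tau_\beta$ in the Reynolds sum gives $Rh_\beta = R(\tau_\beta^{-1}h_\beta)$. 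Therefore $\tilde g_{\hbeta} := \sum_{\beta\in\S_n\cdot\hbeta}\tau_\beta^{-1}h_\beta$ again lies in $C_{\mathrm{AGE}}((\cA\cup\cB)\setminus\{\hbeta\},\hbeta)$ as this is a convex cone, and the contribution of the whole orbit to $f$ equals $R\tilde g_{\hbeta}$.

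To obtain the specific indexing by cosets in~\eqref{eq:symm_decomp}, I set
\[
g_{\hbeta} \; := \; \frac{1}{|\S_n|}\sum_{\eta\in\Stab(\hbeta)}\eta\,\tilde g_{\hbeta},
\]
which is $\Stab(\hbeta)$-invariant by construction and still belongs to $C_{\mathrm{AGE}}((\cA\cup\cB)\setminus\{\hbeta\},\hbeta)$, since each summand $\eta\tilde g_{\hbeta}$ has negative coefficient only at $\eta(\hbeta)=\hbeta$. Decomposing $\S_n$ as the disjoint union $\bigsqcup_{\rho} \rho\Stab(\hbeta)$ over coset representatives yields immediately $\sum_\rho \rho g_{\hbeta} = R\tilde g_{\hbeta}$, and summing over $\hbeta\in\hat\cB$ reproduces the desired identity $f = \sum_{\hbeta}\sum_\rho \rho g_{\hbeta}$.

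The only step that requires genuine care is verifying that the AGE cone $C_{\mathrm{AGE}}(\cdot,\hbeta)$ is stable under the action of $\Stab(\hbeta)$; this is what makes the symmetrization compatible with the AGE structure. It holds because any $\eta\in\Stab(\hbeta)$ merely permutes the nonnegative-coefficient support while preserving both the affine combination $\sum_{\alpha\in\lambda^+}\lambda_\alpha\alpha = \hbeta$ defining a simplicial circuit and the AM/GM relation underlying the circuit characterization of Proposition~\ref{prop:decomp1}. This is also the reason why the ambiguity in the choice of the $\tau_\beta$ (determined only modulo right multiplication by $\Stab(\hbeta)$) is harmless: the final averaging over $\Stab(\hbeta)$ absorbs it.
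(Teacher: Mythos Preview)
The paper does not give its own proof of this proposition; it is quoted verbatim from \cite{mnrtv-2022}, so there is no in-paper argument to compare against. Your Reynolds-operator symmetrization is exactly the standard route and is carried out correctly: average an arbitrary AGE decomposition over $\S_n$, pull each $h_\beta$ back to the chosen representative $\hbeta$ via some $\tau_\beta$ (using that $R$ absorbs the shift $\sigma\mapsto\sigma\tau_\beta$), and then average once more over $\Stab(\hbeta)$ to obtain both the $\Stab(\hbeta)$-invariance and the coset-indexed form of~\eqref{eq:symm_decomp}. One notational point is worth flagging: as literally written, the paper places $g_{\hbeta}$ in $C_{\mathrm{SAGE}}(\cA,\hbeta)=C_{\mathrm{AGE}}(\cA,\hbeta)$, i.e.\ with positive support confined to $\cA$, whereas your construction only yields $g_{\hbeta}\in C_{\mathrm{AGE}}\big((\cA\cup\cB)\setminus\{\hbeta\},\hbeta\big)$. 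This discrepancy is almost certainly a notational slip in the paper rather than a gap in your argument---the example immediately following the proposition, Theorem~\ref{th:symm-circuit-dec}, and every later application are all consistent with your weaker (and correct) version, and the sharper localization is never used.
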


This result implies an additional structure in the decomposition of a signomial as a sum of AGE signomials:

\begin{example}
Assume that $\cA =  \{0\} \cup \mathcal{S}_3 \cdot \hat{\alpha} $ where $\hat{\alpha} = (0,2,4)$, and $\cB = \mathcal{S}_3 \cdot \hat{\beta} $ where $\hat{\beta} = (2,1,1)$.
Then $\Stab(\hat{\beta}) = \{ \id, (2\ 3) \}$ and $\mathcal{S}_3/\Stab(\hbeta)$ is made of three cosets, represented by $\id$, $(1\ 2)$ and $(1\ 3)$.
Concretely, Proposition~\ref{th:symmetric-decomp} implies that any signomial $f \in C^\S_{\mathrm{SAGE}}(\cA, \cB)$ can be written in the form
\[
f = g + (1\ 2) \cdot g + (1\ 3) \cdot g, 
\]
where $g$ is an AGE signomial of the form
\[
g(x) = c_0 + c_1 (e^{2x_2+4x_3}+e^{4x_2+2x_3}) + c_2(e^{2x_1+4x_2}+e^{2x_1+4x_3}) + c_3(e^{4x_1+2x_2}+e^{4x_1+2x_3}) - d e^{2x_1+x_2+x_3},
\]
with $c_i \geq 0$ for $0 \leq i \leq 3$ and $d \in \R$.
\end{example}

Such a restrictive decomposition then allows for reductions in the number of variables and constraints in the algorithms deciding whether a signomial is a SAGE, as discussed in \cite{mnrtv-2022}.
From another perspective, we get a more precise decomposition of $C^\S_{\mathrm{SAGE}}(\cT)$  with respect to Proposition~\ref{prop:decomp1}. 
For a circuit $\lambda$ with $\beta= \lambda^-$, we then introduce $C^\S_{\mathrm{SAGE}}(\lambda)$ the
\emph{symmetrized $\lambda$-witnessed cone} 
\[
    C^\S_{\mathrm{SAGE}}(\lambda) := \left\{
   \sum_{\rho \in \S_n} \rho g,\quad g \in C_{\mathrm{AGE}}(\lambda^+, \lambda^-)
        \right\}.
\]
	
In Proposition~\ref{th:symmetric-decomp}, every $g_{\hbeta}$ comes from the symmetrization under $\Stab{\hbeta}$ of a sum of  signomials supported on circuits $\lambda$ with $\lambda^- = \hat{\beta}$.
Hence, we obtain the following symmetric version of Proposition~\ref{prop:decomp1}.
	
\begin{thm}[Symmetry-adapted circuit decomposition]
\label{th:symm-circuit-dec}
Let $\hat{\mathcal{T}}$ be a set of orbit representatives for the whole support
set $\mathcal{T}$.
Then the symmetric cone $C_{\mathrm{SAGE}}^\S(\mathcal{T})$ decomposes as
\[
  C_{\mathrm{SAGE}}^\S(\mathcal{T}) \ = \
  \sum_{\hbeta \in \hat{\mathcal{T}}} \;
  \sum_{\substack{ \lambda \in \Lambda(\cT) \\ \lambda^- = \hbeta}} \,
  C^\S_{\mathrm{SAGE}}(\lambda)
  + \sum_{\halpha  \in \hat{\cT}}
    \R_+ \sum_{\rho \in \S_n/\Stab(\halpha)} 
    \rho \exp(\langle \hat{\alpha},x \rangle) .
\]
\end{thm}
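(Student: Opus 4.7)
The plan is to start from the non-symmetric circuit decomposition of Proposition~\ref{prop:decomp1} and symmetrize it using the $\S_n$-invariance of $f$, grouping the resulting terms according to the $\S_n$-orbits of circuits and of support points.

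The inclusion ``$\supseteq$'' is immediate: each generator $C^\S_\sage(\lambda)$ on the right-hand side consists by definition of $\S_n$-symmetrizations of AGE signomials, and each orbit sum $\sum_{\rho \in \S_n/\Stab(\halpha)}\rho \exp(\langle\halpha,x\rangle)$ is a nonnegative $\S_n$-invariant signomial, so both types of generators lie in $C^\S_\sage(\cT)$.

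For the reverse inclusion, I would start with $f \in C^\S_\sage(\cT)$, apply Proposition~\ref{prop:decomp1} to obtain a (non-symmetric) circuit decomposition $f = \sum_{\lambda\in\Lambda(\cT)} f_\lambda + \sum_{\alpha \in \cT} c_\alpha \exp(\langle\alpha,x\rangle)$ with $f_\lambda \in C_\sage(\cT,\lambda)$ and $c_\alpha \geq 0$, and then exploit the $\S_n$-invariance of $f$ to replace this decomposition by its $\S_n$-average
\[
  f = \frac{1}{|\S_n|}\sum_{\sigma \in \S_n}\sigma f = \sum_{\lambda \in \Lambda(\cT)}\frac{1}{|\S_n|}\sum_{\sigma \in \S_n}\sigma f_\lambda + \sum_{\alpha \in \cT}\frac{c_\alpha}{|\S_n|}\sum_{\sigma \in \S_n}\exp(\langle\sigma\alpha,x\rangle).
\]
Next, I would partition $\Lambda(\cT) = \bigsqcup_{\hat\lambda} \S_n\cdot\hat\lambda$, choosing each orbit representative $\hat\lambda$ so that $\hat\lambda^- = \hbeta$ for some $\hbeta \in \hat{\cT}$ (always possible, by taking $\hbeta$ to be the orbit representative of $\hat\lambda^-$ and then translating $\hat\lambda$ inside its orbit accordingly). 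For a fixed $\hat\lambda$, writing $\lambda = \tau\hat\lambda$ and substituting $\sigma' := \sigma\tau$, the orbit contribution rearranges as $\sum_{\lambda \in \S_n \hat\lambda}\tfrac{1}{|\S_n|}\sum_{\sigma}\sigma f_\lambda = \sum_{\sigma' \in \S_n}\sigma'\, g_{\hat\lambda}$ for a conic combination $g_{\hat\lambda} \in C_\sage(\cT,\hat\lambda)$ obtained by averaging the transported pieces $\tau^{-1} f_{\tau\hat\lambda}$; this places the orbit contribution into $C^\S_\sage(\hat\lambda)$. The monomial part is handled analogously: regrouping by orbit representative $\halpha$ yields a nonnegative multiple of $\sum_{\rho \in \S_n/\Stab(\halpha)}\rho\exp(\langle\halpha,x\rangle)$, matching the second family of RHS generators.

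The main obstacle I anticipate is the careful reindexing of the circuit sum: verifying that each $\tau^{-1} f_{\tau\hat\lambda}$ lies in $C_\sage(\cT,\hat\lambda)$ (a general invariance-transport property of the $\lambda$-witnessed cone under the group action, moving $\lambda$ through its $\S_n$-orbit) and that the resulting full $\S_n$-orbit sum of the averaged element $g_{\hat\lambda}$ matches the definition of $C^\S_\sage(\hat\lambda)$. Once this transport lemma and the choice of orbit representatives with $\hat\lambda^- \in \hat{\cT}$ are in place, the remaining bookkeeping reduces to collecting terms and is straightforward.
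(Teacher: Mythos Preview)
Your argument is correct. The paper's own justification is the brief sentence preceding the theorem: it invokes Proposition~\ref{th:symmetric-decomp} to write $f=\sum_{\hbeta}\sum_{\rho\in\S_n/\Stab(\hbeta)}\rho g_{\hbeta}$ with each $g_{\hbeta}$ an AGE signomial, and then applies the (non-symmetric) circuit decomposition of Proposition~\ref{prop:decomp1} to each $g_{\hbeta}$ separately, obtaining circuits with $\lambda^-=\hbeta$. Your route reverses the order of these two steps: you first apply Proposition~\ref{prop:decomp1} to $f$ itself and only then symmetrize by averaging over $\S_n$, regrouping circuits by $\S_n$-orbits. Both arguments are short and rest on the same two ingredients; the paper's version is more modular (it reuses the result of~\cite{mnrtv-2022} as a black box), while yours is slightly more self-contained and makes the transport property $\sigma\cdot C_\sage(\cT,\lambda)=C_\sage(\cT,\sigma\lambda)$ explicit. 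One small point to keep clean in your write-up: the pieces $f_\lambda\in C_\sage(\cT,\lambda)$ may carry extra nonnegative terms supported outside $\lambda^+\cup\{\lambda^-\}$, so after symmetrizing you should split those off into the monomial part before identifying the circuit contribution with an element of $C^\S_\sage(\hat\lambda)$, whose definition requires $g\in C_{\mathrm{AGE}}(\hat\lambda^+,\hat\lambda^-)$.
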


\section{Zeroes and minimizers of symmetric SAGE and SONC forms\label{sec:zeroes}}

One of the goals of the paper is to provide information on the gap between the SAGE and SONC bounds 
and the minimum of a symmetric signomial/polynomial, and in particular to find situations in which there is no gap.
In this case, $f-f^*$ is a SAGE respectively SONC form whose infimum is $0$. 
This encourages to understand the structure of the zeroes of 
symmetric forms of this kind, that will lead to examples and 
counterexamples about the exactness of the bounds.

In general, the set of all zeros of any SAGE signomial is convex and when finite, this zero set has cardinality at most one (see \cite[Theorem 4.1]{FdW-2022}).
Similarly, any SONC polynomial in $f \in \R[x_1, \ldots, x_n]$ 
with a finite number of zeroes 
has at most $2^n$ real zeroes in $(\R \setminus \{0\})^n$
(see \cite[Corollary 4.1]{dressler-2020}), because it has at most one zero in 
each open orthant. 
The invariance under the action of the symmetric group forces additional structure on the zeroes of SAGE signomials:

\begin{lemma}
\label{le:zeroes1}
Let $f$ be a non-constant SAGE
signomial in $n\geq 2$ variables that is invariant under the action of 
$\sym_n$. If the zero set 
$V_{\R}(f)$ of $f$ is non-empty, then there are three
possibilities:
\begin{enumerate}
\item $V_{\R}(f)$ is a singleton on the diagonal.
\item $V_{\R}(f)$ is the diagonal.
\item $V_{\R}(f)$ is an affine hyperplane of the form 
$\{x \, : \, \sum_{i=1}^n x_i = \tau\}$ for some constant $\tau \in \R$.
\end{enumerate}
In particular, if a symmetric SAGE signomial has a zero, then it has at least one zero on the diagonal.
\end{lemma}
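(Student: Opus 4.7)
The plan is to leverage the structural result of~\cite{FdW-2022} that the real zero set of any SAGE signomial is in fact an affine subspace of $\R^n$ (a property slightly stronger than the convexity statement recalled in the paragraph above the lemma), combined with the $\S_n$-invariance of $V_\R(f)$, which is inherited directly from the $\S_n$-invariance of $f$. The problem then reduces to classifying the $\S_n$-invariant affine subspaces of $\R^n$ and discarding those that are incompatible with the hypotheses.

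As a first main step, I would invoke the standard decomposition of the permutation representation of $\S_n$ on $\R^n$ (with $n \ge 2$) into its two irreducible components: the one-dimensional trivial subrepresentation $D = \R \cdot (1, \ldots, 1)$, namely the diagonal, and the standard hyperplane $H = \{x \in \R^n \, : \, \sum_{i=1}^n x_i = 0\}$. Since these irreducibles are non-isomorphic and each appears with multiplicity one, the only $\S_n$-invariant linear subspaces of $\R^n$ are $\{0\}$, $D$, $H$ and $\R^n$ itself.

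Next, I would pass from invariant linear to invariant affine subspaces. Writing $V_\R(f) = v_0 + W$ with $W$ one of the four invariant linear subspaces above, $\S_n$-invariance forces $\sigma(v_0) - v_0 \in W$ for every $\sigma \in \S_n$. A short calculation using that permutations preserve the coordinate sum then yields: if $W = \{0\}$, then $v_0 \in D$, which is case~(1); if $W = D$, then $v_0 \in D$ as well, so $V_\R(f) = D$, which is case~(2); and if $W = H$, then any $v_0$ is admissible and $V_\R(f)$ is the hyperplane $\{x \, : \, \sum_{i=1}^n x_i = \tau\}$ with $\tau = \sum_{i=1}^n (v_0)_i$, which is case~(3). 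The remaining possibility $W = \R^n$ is ruled out because it would force $f \equiv 0$, contradicting the assumption that $f$ is non-constant.

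Finally, for the ``in particular'' clause: cases (1) and (2) trivially exhibit a zero on the diagonal, and in case (3) the point $(\tau/n, \ldots, \tau/n)$ lies on the diagonal while satisfying $\sum_{i=1}^n x_i = \tau$. The only delicate point is to correctly invoke the affine (rather than merely convex) structure of the zero set from~\cite{FdW-2022}; once this input is in hand, the remainder is routine representation-theoretic bookkeeping based on the two-summand decomposition of the permutation representation on $\R^n$.
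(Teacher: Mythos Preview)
Your proposal is correct and follows essentially the same approach as the paper: invoke \cite[Theorem~3.1]{FdW-2022} to get that $V_\R(f)$ is an affine subspace, observe it is $\S_n$-invariant, and classify the non-empty $\S_n$-invariant affine subspaces of $\R^n$. The paper simply asserts this classification in one sentence, whereas you supply the details via the decomposition $\R^n = D \oplus H$ of the permutation representation into irreducibles; your added detail is sound and makes explicit what the paper leaves to the reader.
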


\begin{proof}
Recall that the set of zeroes of any SAGE
signomial is an affine subspace, see  (\cite[Theorem~3.1]{FdW-2022}).
Clearly, the zero set of $f$ is invariant under $\sym_n$.
The only non-empty invariant affine subspaces which are invariant under the
symmetric group are: a single point on the diagonal, the diagonal
or an affine hyperplane 
$\{x \ : \sum_{i=1}^n x_i = \tau\}$ for some $\tau \in \R$. 
\end{proof}

\begin{example}
All three cases in Lemma~\ref{le:zeroes1} can occur.
Let $g(x)= e^x + e^{-x}-2$. The signomial $g$ is a univariate AGE form, whose only zero is $x=0$. 
Then 
\[
\begin{cases}
\sum_{i=1}^n g(x_i - \gamma) & \text{vanishes only at} (\gamma, \ldots, \gamma), \gamma\in \R, \\
\sum_{i,j=1}^n g(x_i - x_j) & \text{vanishes on the diagonal}, \\
g((\sum_{i=1}^n x_i) - \tau) & \text{vanishes on } \{x \ : \sum_{i=1}^n x_i = \tau\}, \tau\in \R,
\end{cases}
\]
and they are all symmetric SAGE forms.
\end{example}
 
Following Section~\ref{ssec:SAGEvsSONC}, for $\omega \in \{-1,1\}^n$,  the zeros of $p$ in the open orthant $\{x \in \R^n : \omega_i x_i >  0, \, 1 \le i \le n\}$ correspond with the zeros of $p^\omega$ in the positive orthant.
Denote by $V_{>0}(p)$ the zero set of a polynomial $p$ in the
positive orthant.

\begin{cor}
\label{co:sonc-zeroes}
    Let $p$ be a non-constant SONC polynomial in $n$ variables that is $\S_n$-invariant. 
For $\omega \in \{\pm 1\}^n$:
\begin{enumerate}
\item     If $p^\omega \neq \tilde{p}$, then $V_{>0}(p^\omega)$ is empty. 
\item If $p^\omega = \tilde{p}$, then $V_{>0}(p^\omega)=V_{>0}(\tilde{p})$, which can be, if non-empty,
\begin{enumerate}
        \item a singleton on the diagonal in $\R_{>0}^n$,
        \item the diagonal in $\R_{>0}^n$,
        \item an hypersurface  of the form $\{x:\ \prod_{i=1}^n x_i = \tau\}$ intersected with $\R_{>0}^n$, for some constant $\tau > 0$.     
    \end{enumerate}
\end{enumerate}    
\end{cor}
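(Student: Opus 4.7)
The plan is to reduce everything to the signomial setting via the SONC--SAGE bridge of Proposition~\ref{pr:sage-sonc1}, then apply Lemma~\ref{le:zeroes1} to the resulting symmetric SAGE signomial.

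For part (1), I would first establish the pointwise comparison $\tilde{p}(x) \le p^\omega(x)$ on $\R^n_{>0}$. Writing $p = \sum_\alpha c_\alpha x^\alpha$, the two expressions agree on monomials with $\alpha \in (2\N)^n$, while for $\alpha \notin (2\N)^n$ the coefficient in $\tilde{p}$ is $-|c_\alpha|$ and in $p^\omega$ is $c_\alpha \omega^\alpha \ge -|c_\alpha|$. Hence $p^\omega - \tilde{p}$ is a polynomial with nonnegative coefficients in variables taking positive values, and if $p^\omega \ne \tilde{p}$ at least one coefficient is strictly positive, giving $\tilde{p}(x) < p^\omega(x)$ for every $x \in \R^n_{>0}$. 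Since $p$ is SONC, Proposition~\ref{pr:sage-sonc1} tells us that $\sig(\tilde{p})$ is SAGE, so $\tilde{p} \ge 0$ on $\R^n_{>0}$. Combining, $p^\omega > 0$ on $\R^n_{>0}$, so $V_{>0}(p^\omega) = \emptyset$.

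For part (2), the equality $V_{>0}(p^\omega) = V_{>0}(\tilde{p})$ is immediate once $p^\omega = \tilde{p}$, so the work is in describing $V_{>0}(\tilde{p})$. I would first check that $\tilde{p}$ inherits $\S_n$-invariance from $p$: the partition of the support into $(2\N)^n$ and its complement is $\S_n$-stable, and coefficients within an orbit of $p$ coincide, so the same holds for $\tilde{p}$; consequently $\sig(\tilde{p})$ is $\S_n$-invariant. Next, the diagonal substitution $x_i = e^{y_i}$ gives a bijection between $V_{>0}(\tilde{p})$ and the real zero set $V_\R(\sig(\tilde{p}))$ in $\R^n$.

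By Proposition~\ref{pr:sage-sonc1}, $\sig(\tilde{p})$ is a symmetric SAGE signomial, and it is non-constant because every non-constant monomial of $p$ produces a non-constant monomial of $\tilde{p}$. Lemma~\ref{le:zeroes1} then forces $V_\R(\sig(\tilde{p}))$ to be either a singleton on the diagonal of $\R^n$, the whole diagonal, or an affine hyperplane $\{y : \sum_{i=1}^n y_i = \tau\}$. Applying the coordinate-wise exponential translates these three possibilities respectively into a singleton on the diagonal of $\R^n_{>0}$, the diagonal of $\R^n_{>0}$, and the hypersurface $\{x \in \R^n_{>0} : \prod_{i=1}^n x_i = e^\tau\}$, yielding cases (a), (b), (c) with $\tau' := e^\tau > 0$.

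There is no serious obstacle here: both the pointwise comparison in part (1) and the transport of symmetry through the definition of $\tilde{p}$ are routine, and the structural dichotomy follows directly from Lemma~\ref{le:zeroes1}. The only step requiring a bit of care is checking that the three invariant affine subspaces of $\R^n$ are sent by $\exp$ to exactly the three geometric shapes listed in the statement.
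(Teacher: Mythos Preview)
Your proposal is correct and follows essentially the same approach as the paper: the pointwise comparison $\tilde{p}\le p^\omega$ on $\R_{>0}^n$ together with nonnegativity of $\tilde{p}$ handles part~(1), and part~(2) is obtained by observing that $\tilde{p}$ remains $\S_n$-invariant and applying Lemma~\ref{le:zeroes1} to $\sig(\tilde{p})$ through the exponential change of variables. The only cosmetic difference is that the paper phrases the nonnegativity step as ``$p$ SONC $\Rightarrow$ $\tilde{p}$ SONC'' rather than going through $\sig(\tilde{p})$ being SAGE, but this is the same fact from Section~\ref{ssec:SAGEvsSONC}.
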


\begin{proof}
For $\omega \in \{\pm 1\}^n$, write $p^\omega(x) = \sum_{\alpha \in \cA} c_\alpha x^\alpha$. If $p^\omega \neq \tilde{p}$, then there is $\kappa\in \cA \setminus (2\N)^n$, such that $c_\kappa >0$.
Then we have, for every $x\in \R_{>0}^n$,
\[
p^\omega(x) - \tilde{p}(x) = \sum_{\gamma \in \mathcal{A}\setminus (2\N)^n} (c_{\gamma}+|c_{\gamma}|) x^\gamma \geq (c_{\kappa}+|c_{\kappa}|) x^\kappa >0.
\] 
Now recall that if $p$ is SONC, then $\tilde{p}$ is a SONC as well, which implies that $\tilde{p}(x) \geq 0$, and therefore $p^\omega(x) >0$, proving the first part of the statement. 
Since $p$ is $\S_n$-invariant, then so is $\tilde{p}$, and
the second part follows from Lemma~\ref{le:zeroes1}, after the exponential change of variable. 
\end{proof}

The previous results give an understanding of the zeroes of SONC polynomials in the open orthants, but we might have zeroes on the coordinate hyperplanes.
Any zero of $p$ on a hyperplane and which is not the origin itself
can be viewed (by permuting the coordinates)
as a zero in
$(\R_{\neq 0})^{k} \times \{0\}^{n-k}$ for some 
$k \in \{1, \ldots,n-1\}$. The characterization of all zeroes
in $(\R_{\neq 0})^{k} \times \{0\}^{n-k}$ can be done by
considering
\[
  q(x_1, \ldots, x_k) = p(x_1, \ldots, x_k,0, \ldots, 0)
\]
and applying the SAGE version in Lemma~\ref{le:zeroes1}.

Lemma~\ref{le:zeroes1} as well as 
Corollary~\ref{co:sonc-zeroes} can be
used as a criterion to show that certain signomials cannot be
SAGE signomials or certain polynomials cannot be SONC polynomials.

\begin{example}
Consider the nonnegative,
symmetric polynomial $p = (1-x_1^2-x_2^2)^2 \in \R[x_1,x_2]$.
Its zero set is $\{x \in \R^2 \, : \, 1 -x_1^2-x_2^2 = 0\}$, which does not
fall into any of the classes in Corollary~\ref{co:sonc-zeroes}.
Hence, $p$ cannot be a SONC polynomial.
\end{example}

As a next question, it is natural to wonder whether in general, when the minimum is not zero, the set of minimizers of a SAGE and 
SONC form still offers a strong structure. 
However, the next example shows that there is no reason for the diagonal to contain minimizers of such forms:
		
\begin{example} 
\label{ex:minimizers-non-diagonal}
Suppose $f$ is an even univariate SAGE signomials with several minimizers away from the origin. 
For example, consider
\[
  f(x) \ = \ 4 e^x - 4 e^{2x} + e^{3x} + (4 e^{-x} - 4 e^{-2x} + e^{-3x}),
\]
which has two minimizers outside of the origin.
Then the function $g(x,y) := f(x-y)$
is a symmetric SAGE signomial with infinitely many minimizers, all outside of the diagonal.
\end{example}

Even if this example is degenerated in the sense that it has no isolated minimizers
and that the negative support points are contained in the boundary of
the Newton polytope, Section~\ref{se:quantitative} will provide non-degenerate examples.

\section{Comparison of the symmetric cones with the symmetric nonnegative cone\label{sec:comparison}}

We come to the main topic of the paper: in a symmetric situation, how far is the notion of being SAGE or SONC from being nonnegative?
This evaluation can be formulated with several questions of slightly different flavors: Are there cases in which the two notions are equivalent? When this is not the case, how far is the relaxation bound from the infimum of the
function? 
Can we evaluate precisely the difference between the two cones?

After providing a new case where SAGE and SONC methods give the infimum of a function, we will focus on two cases in which Sums Of Squares coincide with nonnegative polynomials and see that this is not the case for SONC polynomials, even in the symmetric case.

\subsection{A case of exactness}

As described in the introduction, there are several situations in which 
SAGE and SONC methods provide the infimum of a function, like in the work of Wang~\cite{wang-nonneg} (see also \cite{iliman-dewolff-resmathsci, mcw-2021}). 
In this section, we provide a new class of symmetric signomials, where the two values coincide, precisely when there is a unique orbit in the support corresponding with positive coefficients.

\begin{thm}
\label{th:class1}
Let $\cA = \S_n \cdot \hat{\alpha}$ for some $\hat{\alpha} \in \R^n$. Let $\hat{\beta}_i \in \R^n$ for $1 \leqslant i \leqslant m$ be such that $\hat{\beta}_i \in \inter(\conv(\cA \cup \{0\}))$ and 
$ \hat{\beta}_1, \ldots, \hat{\beta}_m$ are in distinct orbits under $\S_n$.
Denote $\cB_i = \S_n \cdot \hat{\beta}_i$ and let
\begin{equation}
  \label{eq:class1}
  f(x) = c \sum_{\alpha \in \cA} \exp(\langle \alpha,x\rangle) 
  - \sum_{i=1}^m d_i \sum_{\beta \in \cB_i} \exp(\langle \beta, 
    x \rangle) + w
\end{equation}
with $c,d_i>0$ and $w \in \R$. Then $f^* = f^{\mathrm{SAGE}}.$
\end{thm}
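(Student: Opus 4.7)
My plan is to reduce the multivariate problem to a univariate signomial on the diagonal, solve it explicitly, and lift an optimal SAGE certificate back to $\R^n$ via the symmetric circuit decomposition of Theorem~\ref{th:symm-circuit-dec}.

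Set $s:=\sum_j\hat\alpha_j$, $s_i:=\sum_j(\hat\beta_i)_j$, and
\[
  \phi(t):=f(t,\dots,t)=c|\cA|e^{st}-\sum_{i=1}^m d_i|\cB_i|e^{s_it}+w.
\]
The interior condition forces $0<s_i<s$ (WLOG $s>0$): any $\Stab(\hat\beta_i)$-symmetric barycentric decomposition $\hat\beta_i=\sum_\alpha\lambda_\alpha^{(i)}\alpha+\lambda_0^{(i)}\cdot 0$ has $\sum_\alpha\lambda_\alpha^{(i)}\in(0,1)$ and yields $s_i=s\sum_\alpha\lambda_\alpha^{(i)}$. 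The stationary equation $\phi'(t)=0$ in $v=e^t$ becomes $c|\cA|s\,v^{s}=\sum_i d_i|\cB_i|s_i\,v^{s_i}$, and since each $s_i-s<0$ the quotient of the two sides is strictly monotone in $v$; hence there is a unique positive solution $v^*=e^{t^*}$, giving the diagonal minimum $\phi^*=\phi(t^*)$---the ``unique positive zero of a univariate signomial'' advertised in the introduction.

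For $f^*=\phi^*$, I decompose $x=\bar x\mathbf 1+u$ with $\sum_j u_j=0$, set $P(u):=\sum_{\alpha\in\cA}e^{\langle\alpha,u\rangle}$, $Q_i(u):=\sum_{\beta\in\cB_i}e^{\langle\beta,u\rangle}$, and choose $\tilde x:=\bar x+s^{-1}\log(P(u)/|\cA|)$, so that $cP(u)e^{s\bar x}=c|\cA|e^{s\tilde x}$ matches exactly. The key point is the orbit-level H\"older-type inequality
\[
  \frac{Q_i(u)}{|\cB_i|}\ \leq\ \left(\frac{P(u)}{|\cA|}\right)^{s_i/s}
  \qquad\text{on }\{u:\ \textstyle\sum_j u_j=0\},
\]
which together with the matching of the positive parts yields $f(x)\geq\phi(\tilde x)\geq\phi^*$, so $f^*=\phi^*$.

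Finally, an explicit symmetric SAGE certificate for $f-\phi^*$ is built via Theorem~\ref{th:symm-circuit-dec}: for each $i$ I seek a $\Stab(\hat\beta_i)$-invariant AGE form $g_i$ with negative support $\hat\beta_i$ and positive support in $\cA\cup\{0\}$, so that $\sum_i\sum_{\rho\in\S_n/\Stab(\hat\beta_i)}\rho g_i=f-\phi^*$. Distributing the coefficients on $\cA$ and on the constant $w-\phi^*$ among the $g_i$ according to the barycentric weights $\lambda^{(i)}$ from above, the stationary condition $\phi'(t^*)=0$ forces each AGE inequality to be tight at $t^*\mathbf 1$; this gives $f^{\mathrm{SAGE}}\geq \phi^*=f^*$, and combined with the trivial bound $f^{\mathrm{SAGE}}\leq f^*$ the theorem follows. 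The main obstacle is the H\"older-type inequality: a direct weighted AM-GM yields only the additive bound $Q_i(u)/|\cB_i|\leq (s_i/s)(P(u)/|\cA|)+(1-s_i/s)$, so promoting it to the multiplicative form requires a finer argument that genuinely exploits the orbit structure of $\cA$ and $\cB_i$ together with the strict interior condition on $\hat\beta_i$.
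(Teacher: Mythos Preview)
Your overall architecture matches the paper's: reduce to the diagonal, locate the unique critical point $t^*$ of the diagonalization $\phi$, and then build an explicit symmetric SAGE certificate for $f-\phi(t^*)$ using $\Stab(\hat\beta_i)$-invariant AGE pieces $g_i$ whose coefficients are governed by the barycentric weights $\lambda^{(i)}$. That is exactly what the paper does.

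The difference is your Step~2, the H\"older-type inequality $Q_i(u)/|\cB_i|\le (P(u)/|\cA|)^{s_i/s}$ on $\{\sum_j u_j=0\}$. You correctly flag this as the main obstacle, and you have not proved it; but more importantly, it is entirely unnecessary. Once Step~3 succeeds and $f-\phi(t^*)\in C_{\mathrm{SAGE}}$, nonnegativity of $f-\phi(t^*)$ is automatic, so $f^*\ge\phi(t^*)$; together with the trivial $f^*\le f(t^*,\dots,t^*)=\phi(t^*)$ and $f^{\mathrm{SAGE}}\le f^*$, the chain $\phi(t^*)\le f^{\mathrm{SAGE}}\le f^*\le\phi(t^*)$ closes without ever invoking your orbit H\"older bound. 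The paper proceeds exactly this way and never proves (or states) such an inequality. So you should simply delete Step~2.

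What remains is Step~3, which you only sketch. This is where all the work lies, and your description (``distribute the coefficients \dots\ the stationary condition forces each AGE inequality to be tight'') is not yet a proof. The paper carries it out explicitly: with $c=1$, it sets $\nu_0^{(i)}=d_i\lambda_0^{(i)}$, $\nu_\alpha^{(i)}=d_i\lambda_\alpha^{(i)}$, $c_0^{(i)}=\nu_0^{(i)}e^{s_i t^*}$, and $c_\alpha^{(i)}=d_i e^{(s_i-s)t^*}\lambda_\alpha^{(i)}/(1-\lambda_0^{(i)})$, then (i) verifies the relative entropy condition $D(\nu^{(i)},e\cdot c^{(i)})=-d_i$ for each $g_i$, and (ii) checks via an orbit-counting argument that the symmetrized sum $\sum_i\sum_{\rho}\rho g_i$ reproduces precisely $f-\phi(t^*)$. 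Part~(ii) is where $\phi'(t^*)=0$ is actually used: it is what makes the total $\cA$-coefficient come out to $1$ and the constant term come out to $w-\phi(t^*)$. You should supply these computations; the idea you state is correct, but the verification that the coefficients add up is the substance of the argument.
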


\begin{rem}
Even in the restriction to nonnegative integer exponents,
Theorem~\ref{th:class1} covers situations which are not covered 
by Wang's result \cite[Theorem 4.1]{wang-nonneg} (which is stated
in the language of polynomials). This happens as soon
as there are hyperplanes $H$ determined  by positive support points, for
which both corresponding halfspaces contain interior points of the Newton
polytope of $f$.
Moreover, this result generalizes \cite[Corollary 7.5]{iliman-dewolff-resmathsci}, where the outer orbit had to be a simplex.

\begin{figure}[t]
\begin{center}
\begin{tikzpicture}[scale=.5]

\pgfmathsetmacro\AngleFuite{150}
\pgfmathsetmacro\coeffReduc{.72}
\pgfmathsetmacro\clen{2}
\pgfmathsinandcos\sint\cost{\AngleFuite}

\begin{scope} [x     = {(\coeffReduc*\cost,-\coeffReduc*\sint)},
                y     = {(1cm,0cm)},
               z     = {(0cm,1cm)}]

\path coordinate (R1) at (4,2,0)
      coordinate (R2) at (4,0,2)
      coordinate (R3) at (2,4,0)
      coordinate (R4) at (2,0,4)
      coordinate (R5) at (0,4,2)
      coordinate (R6) at (0,2,4)
      coordinate (O) at (0,0,0) 
      coordinate (I1) at (5/3,4/3,1)
      coordinate (I2) at (1,4/3,5/3)
      coordinate (V1) at (2,1,1)
      coordinate (V2) at (1,2,1)
      coordinate (V3) at (1,1,2);

\draw (R1)--(R2) -- (R4) -- (R6);
\draw[dashed] (R6)--(R5)--(R3)--(R1);
\draw (V1)--(I1);
\draw (V3)--(I2);
\draw[dashed] (I2)--(V2);
\draw[dashed] (I1)--(V2);
\draw (V1)--(V3);

\draw[-latex] (O) -- (6, 0, 0) node[left] {$x_1$};
\draw[-latex] (O) -- (0, 6, 0) node[below] {$x_2$};
\draw[-latex] (O) -- (0, 0, 6) node[left] {$x_3$};

\fill[blue] (O) circle (2.8pt);

\fill[blue] (I1) circle (1pt);
\fill[blue] (I2) circle (1pt);
\fill[red] (R1) circle (2.8pt);
\fill[red] (R2) circle (2.8pt);
\fill[red, opacity = 0.5] (R3) circle (2.8pt);
\fill[red] (R4) circle (2.8pt);
\fill[red, opacity = 0.5] (R5) circle (2.8pt);
\fill[red] (R6) circle (2.8pt);
\fill[green] (V1) circle (2.8pt);
\fill[green, opacity = 0.5] (V2) circle (2.8pt);
\fill[green] (V3) circle (2.8pt);

\draw[fill = blue, opacity = 0.2]
(-3,-2,-1)
-- (8,4,0)
-- (8,7,6)
-- (-3,1,5) -- cycle;

\end{scope} 

\end{tikzpicture}  
\end{center}
\caption{The hexagon and the symmetric points $(2,1,1)$, $(1,2,1)$ and
$(1,1,2)$. The hyperplane separates the point $(1,2,1)$ from each
of the points $(2,1,1)$ and $(1,1,2)$.}
\label{fi:hexagon1}
\end{figure}
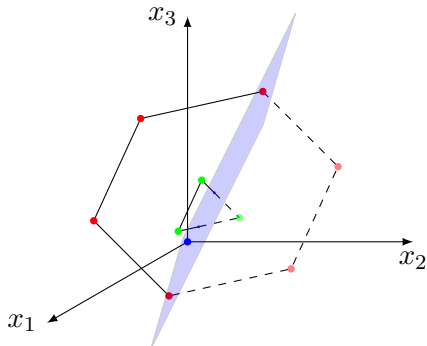
While in two variables our result coincides with both statements because in this situation $\cA \cup \{0\}$ is a simplex, our setup provides new examples of exactness already in three variables.
Whenever $\hat{\alpha}$ has more than two distinct coordinates, then $\cA \cup \{0\}$ will not be reduced to a simplex. 
As a concrete example, take $\hat{\alpha} = (0,2,4)$.
Then the convex hull $\conv(\cA \cup \{0\})$ is a pyramid whose basis is the hexagon made of the six permutations of $(0,2,4)$ and the apex is $0$. 
This polytope is equivalently given by seven inequalities: the three inequalities $x_i \geq 0$, the three inequalities $2x_i + 2x_j - x_k \geq 0$ and the inequality $x_1 + x_2 + x_3 \leq 6$.
Now consider the hyperplane 
$\mathcal{H} = \{x \, : \, x_1 - 2x_2 + x_3 = 0\}$, determined
by the vertices 
$0$, $(4,2,0)$, and $(0,2,4)$, and take $\hat{\beta} = (2,1,1)$. 
Then $\hat{\beta}$ is in the interior of the polytope, and the two symmetric points $(2,1,1)$ and $(1,2,1)$ are separated from each other
by $\mathcal{H}$, showing that this example is not covered by Wang's result. See Figure~\ref{fi:hexagon1}.
\end{rem}

A key ingredient in the proof of Theorem~\ref{th:class1} is the univariate signomial given by the restriction of a signomial $f$ to the diagonal, that is $h(t) = f(t,\ldots,t)$. 
From now on, we call $h$ the \emph{diagonalization} of $f$.

\begin{proof}[Proof of Theorem~\ref{th:class1}]
Let us first explain our strategy.
First we will show that the diagonalization $h$ of $f$ has a unique minimizer $t_0$. Then, for $x_0=(t_0, \ldots, t_0)$, we will show that $f - f(x_0)$ is a SAGE signomial, which proves $f(x_0) \leq f^{\mathrm{SAGE}}$. 
This establishes the inequalities 
\[   
f^* \leq f(x_0) \leq f^{\mathrm{SAGE}} \leq f^*
\]
that ensure the equality.
 Without loss of generality, we may assume that $c=1$.
Set $a:=\sum_{j=1}^n \alpha_j$ and for $i \in \{1, \ldots, n\}$ set
$b_i := \sum_{j=1}^n \beta_j$ for any arbitrarily chosen 
$\alpha \in \cA$ and $\beta \in \cB_i$. We have $a \neq 0$, since
otherwise $\cA$ is contained in the linear hyperplane with normal vector 
$(1, \ldots, 1)^T$ and thus $\inter(\conv(\cA \cup \{0\}))$
would be empty. Further, we have $b \neq 0$, since otherwise $\beta$ cannot be
contained in $\inter(\conv(\cA \cup \{0\}))$.
Let $h$ be the diagonalization of $f$
\begin{equation}
  \label{eq:gt}
  h(t) = f(t,\ldots,t) = \sum_{\alpha \in \cA}
  e^{t a}
  - \sum_{i=1}^m d_i  
  \sum_{\beta \in \cB_i} e^{t b_i}
   = 
   |\cA| e^{t a}
  - \sum_{i=1}^m d_i  
  |\cB_i| e^{t b_i} + w.
\end{equation}
  
By Descartes' rule of signs for signomials \cite{curtiss-1918}
applied to the derivative
$h'$, we see that $h'$ has a unique root $t_0$.
Let $f_{\diag} = h(t_0)$. We show that $x_0=(t_0, \ldots, t_0)$ is a global minimizer for $f$ by showing that $f-f_{\diag}$ is a SAGE signomial.

First, since ${\hbeta}_i \in \inter(\conv(\cA \cup \{0\}))$, there exists $\lambda_0^{(i)} \geqslant 0$, and $\lambda_\alpha^{(i)} \geqslant 0$ for every $\alpha \in \cA$ that satisfy 
$\lambda_0^{(i)} + \sum_{\alpha \in \cA} \lambda_\alpha^{(i)} = 1$ 
and 
$\sum_{\alpha \in \cA} \lambda_\alpha^{(i)} \alpha = \hbeta_i.$ 
We can even assume that these $\lambda_\alpha^{(i)}$ are invariant under the action of $\Stab \hbeta_i$, by taking if necessary $\mu_\alpha^{(i)} = \frac{1}{|\Stab \hbeta_i|} \sum_{\sigma \in \Stab \hbeta_i} \lambda_{\sigma (\alpha)}^{(i)}$.

We observe that $\lambda_0^{(i)} = \frac{a - b_i}{a}$,
because summing over the $n$ coordinate equations gives 
\[b_i = \sum_{j=1}^n\hbeta_{i,j}= \sum_{j=1}^n \sum_{\alpha \in \cA} \lambda_\alpha^{(i)} \alpha_j = \sum_{\alpha \in \cA} \lambda_\alpha^{(i)} a = (1-\lambda_0^{(i)})a.\]

We introduce some notation. Let $m_i=\frac{a|\cA|}{b_i | \cB_i|} = \frac{|\Stab \hbeta_i|}{|\Stab \halpha| (1-\lambda_0^{(i)})}$, and $u_i=\frac{d_i}{m_i}e^{t_0(b_i-a)}$, and set

\[
\begin{cases} 
\nu_0^{(i)} = d_i\lambda_0^{(i)} &\quad \textrm{ and } \quad \nu_\alpha^{(i)} = d_i\lambda_\alpha^{(i)}, \\
c_0^{(i)} = \nu_0^{(i)}e^{t_0 b_i} &\quad \textrm{ and } \quad c_\alpha^{(i)} = u_i m_i \lambda_\alpha^{(i)}.
\end{cases}
\] 
Finally, consider for any $1 \leqslant i \leqslant m$, 
\[
g_{i}(x) = c_0^{(i)} + \sum_{\alpha \in \cA} c_\alpha^{(i)} 
\exp(\langle \alpha, x \rangle) 
-d_i \exp(\langle \hbeta_i, x \rangle).
\]  
It is clear that $\nu_0^{(i)}$, $\nu_\alpha^{(i)}$, $c_0^{(i)}$ and $c_\alpha^{(i)}$ are all 
nonnegative. 
We claim that \[f -f_{\diag}= \sum_{i=1}^m\sum_{\sigma \in \S_n/\Stab \hbeta_i} \sigma g_{i}\] is a 
SAGE decomposition of $f-f_{\diag}$. 
In order to prove it, we show that the relative
entropy characterization in 
\cite[Theorem 4.1]{mnrtv-2022}
applies. The equation $(4.1)$ therein
is trivially verified by definition of $\nu^{(i)}$. For equation $(4.2)$, 
compute the relative entropy expression

\begin{align*} D(\nu^{(i)},e \cdot c^{(i)}) &= \nu_0^{(i)} \ln \frac{\nu_0^{(i)}}{e \cdot c_0^{(i)}} + \sum_{\alpha \in \cA} \nu_\alpha^{(i)} \ln\frac{\nu_\alpha^{(i)}}{ e \cdot c_\alpha^{(i)}} \\
&= d_i \lambda_0^{(i)} \ln 
\frac{ \nu_0^{(i)}}{e \nu_0^{(i)} e^{t_0 b_i}}+d_i \sum_{\alpha \in \cA}\lambda_\alpha^{(i)}\ln\frac{d_i \lambda_\alpha^{(i)}}{e u_i m_i \lambda_\alpha^{(i)}}\\
&= -d_i\lambda_0^{(i)} -d_i \lambda_0^{(i)} b_i t_0 - d_i \sum_{\alpha \in \cA} \lambda_\alpha^{(i)} +d_i \sum_{\alpha \in \cA} \lambda_\alpha^{(i)} (a-b_i) t_0 \\
&= -d_i\lambda_0^{(i)} -d_i \lambda_0^{(i)} b_i t_0 - d_i (1-\lambda_0^{(i)}) +d_i (1-\lambda_0^{(i)}) (a-b_i) t_0\\
&= -d_i + d_i \left((1-\lambda_0^{(i)}) (a-b_i) - \lambda_0^{(i)} b_i\right)
t_0 \\
&=-d_i.
\end{align*}
It remains to show that $(4.3)$ are satisfied. For $i \in \{1, \ldots, m\}$, we
have
\begin{align*}\sum_{\sigma \in \Stab \hbeta_i \backslash \S_n} c_{\sigma(0)}^{(i)} &= |\cB_i|d_i \lambda_0^{(i)} e^{t_0 b_i} 
= d_i|\cB_i| \frac{a-b_i}{a}e^{t_0 b_i} 
=  d_i|\cB_i| e^{t_0 b_i} -\frac{d_i|\cB_i|b_i
  e^{t_0 b_i}}{a}.
\end{align*}
Since $t_0$ is a root of 
$h'(t) = a|\cA| e^{ta} - \sum_{i=1}^m d_i b_i |\cB_i| e^{t b_i}$,
we obtain
\begin{equation}
\label{eq:third-cond-a}
  \sum_{i=1}^m
  \sum_{\sigma \in \Stab \hbeta_i \backslash \S_n} c_{\sigma(0)}^{(i)}  
= \sum_{i=1}^m d_i|\cB_i| e^{t_0 b_i}-|\cA| e^{t_0 a} 
= w-h(t_0)=w-f_{\diag}.
\end{equation}
Now let $\alpha \in \cA$. For $i \in \{1, \ldots, m\}$, we have
\begin{align*}
\sum_{\sigma \in \Stab \hbeta_i \backslash \S_n} c_{\sigma(\alpha)}^{(i)} 
&= \frac{1}{|\Stab \hbeta_i|} \sum_{\tau \in \Stab \hbeta_i} \sum_{\sigma \in \Stab \hbeta_i \backslash \S_n} c_{\tau\sigma(\alpha)}^{(i)}
=\frac{1}{|\Stab \hbeta_i|} \sum_{\rho \in \S_n}  c_{\rho(\alpha)}^{(i)}\\
&= \frac{|\Stab \halpha|}{|\Stab \hbeta_i|} \sum_{\sigma \in \S_n/\Stab \halpha}  c_{\sigma(\alpha)}^{(i)}
= \frac{|\Stab \halpha|}{|\Stab \hbeta_i|} \sum_{\alpha \in \cA}  c_{\alpha}^{(i)}
= \frac{|\cB_i|}{|\cA|}\sum_{\alpha \in \cA} u_im_i \lambda_{\alpha}^{(i)}\\
& = \frac{a}{b_i} u_i \sum_{\alpha \in \cA}\lambda_\alpha^{(i)}
= \frac{a}{b_i} u_i (1-\lambda_0^{(i)})
= u_i.
\end{align*}
Here, we used the bijections $\Stab \halpha \times \S_n /\Stab \halpha \rightarrow \S_n$ and $\Stab \hbeta \backslash \S_n \times \Stab \hbeta \rightarrow \S_n$, combined with the fact that $(c_\alpha^{(i)})_\alpha$ is stable under the action of $\Stab \hbeta_i$.
Hence, for $\alpha \in \cA$,
\begin{equation}
\label{eq:third-cond-b}
\sum_{i=1}^m
\sum_{\sigma \in \Stab \hbeta_i \backslash \S_n} c_{\sigma(\alpha)}^{(i)} 
= \sum_{i=1}^m u_i = \frac{1}{a |\cA| e^{t_0 a}}
\sum_{i=1}^m d_i b_i |\cB_i|  e^{t_0 b_i}= 1.
\end{equation}
Equations~\eqref{eq:third-cond-a} and~\eqref{eq:third-cond-b} show (4.3),
which completes the proof.
\end{proof}

\begin{rem}\label{rem:boundary}
In the proof of Theorem~\ref{th:class1}, we could define the same quantities even if $\hat{\beta_i}$ was on the boundary of the convex polytope $\conv(\cA \cup \{0\})$, except the vertices. Moreover, Descartes' rule of signs would still apply if $|\cA| - \sum_{i, b_i=a}d_i|\cB_i|>0$. So, under some additional conditions on the coefficients $d_i$ for those $\hbeta_i \in \conv(\cA)$, we can relax the condition of $\hbeta_i$ being in the interior of the convex hull, and the theorem would still be true.
\end{rem}

One can notice the connection between Theorem~\ref{th:class1} and Section~\ref{sec:zeroes}: we show that $f^* = f^{\mathrm{SAGE}}$ by showing that there is a point $x_0$ such that $f-f(x_0)$ is SAGE. 
This implies in particular that $x_0$ is a zero of a SAGE form, and Section~\ref{sec:zeroes} encourages to look for such a point on the diagonal. 
The assumptions of the theorem lead to a unique candidate for $x_0$, and we can show that it is indeed the minimum of $f$.

We can reformulate Theorem~\ref{th:class1} in a more concrete way:
It gives a large class of signomials $f$ whose nonnegativity can be easily detected via SAGE certificates, only by looking at the diagonalization of $f$, which is a univariate signomial. 
Theorem~\ref{th:class1} can then be read as follows.

\begin{cor}
\label{co:symm-circuit-number}
For a symmetric signomial $f$ of the 
form~\eqref{eq:class1}, consider its diagonalization 
\[h(t) = |\cA|e^{t\sum_{j=1}^n \hat{\alpha}_j}
  - \sum_{i=1}^m d_i |\mathcal{B}_i| e^{t \sum_{j=1}^n (\hat{\beta}_i)_j}
  +w
  .\]
Then the following are equivalent:
\begin{enumerate}
\item $f$ is nonnegative.
\item $f \in C_{\mathrm{SAGE}}(\cA,\cB)$.
\item $h(t_0) \geqslant 0$,
  where $t_0$ is the unique real zero of the derivative $h'(t)$.
\end{enumerate}
\end{cor}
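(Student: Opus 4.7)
The plan is to establish a cyclic chain of implications (2) $\Rightarrow$ (1) $\Rightarrow$ (3) $\Rightarrow$ (2), in which the first two implications are immediate and the last essentially repackages Theorem~\ref{th:class1}.

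The implication (2) $\Rightarrow$ (1) is by definition: every element of the SAGE cone is globally nonnegative on $\R^n$. For (1) $\Rightarrow$ (3), first note that the existence and uniqueness of $t_0$ as the single real root of $h'$ already follow from Descartes' rule of signs for signomials applied to $h'$, exactly as used in the proof of Theorem~\ref{th:class1}. Then $h(t_0) = f(t_0,\ldots,t_0)$ is just a value of $f$ on the diagonal, so $f \geq 0$ on $\R^n$ immediately forces $h(t_0) \geq 0$.

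The real content lies in (3) $\Rightarrow$ (2). The key observation is that the proof of Theorem~\ref{th:class1} actually produces, \emph{without any sign assumption on $h(t_0)$}, an explicit decomposition
\[
  f - h(t_0) \;=\; \sum_{i=1}^{m} \sum_{\sigma \in \S_n/\Stab(\hat{\beta}_i)} \sigma g_i,
\]
where each $g_i$ is an AGE signomial with its unique negative term at $\hat{\beta}_i \in \cB$, nonnegative constant term $c_0^{(i)}$, and nonnegative coefficients $c_\alpha^{(i)}$ at the exponents $\alpha \in \cA$. Nonnegativity of all these coefficients is automatic from their explicit formulas: it uses only that $d_i > 0$, that all exponentials $e^{t_0 b_i}$, $e^{t_0(b_i - a)}$ are positive, and that the barycentric coordinates $\lambda_\alpha^{(i)} \geq 0$ exist by the interior hypothesis on $\hat{\beta}_i$. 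Thus the construction of the $g_i$ goes through unconditionally, and $f - h(t_0) \in C_{\sage}(\cA,\cB)$.

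Assuming (3), we then write $f = (f - h(t_0)) + h(t_0) \cdot e^{\langle 0, x \rangle}$: a sum of a SAGE signomial and a nonnegative multiple of the constant signomial $1$, itself a trivial AGE contribution. Hence $f \in C_{\sage}(\cA,\cB)$, closing the cycle. The one point requiring care, which I view as the main obstacle in this plan, is the bookkeeping verification that the relative entropy identities (4.1)--(4.3) of \cite{mnrtv-2022} used in the proof of Theorem~\ref{th:class1} hold identically, rather than as inequalities that might rely on $h(t_0) \geq 0$; inspection of equations~\eqref{eq:third-cond-a} and~\eqref{eq:third-cond-b} confirms that they do, so the decomposition of $f - h(t_0)$ is valid in full generality and only the final step invokes condition (3).
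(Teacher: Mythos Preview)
Your argument is correct and follows essentially the same route as the paper: the paper invokes Theorem~\ref{th:class1} for (1)\,$\Leftrightarrow$\,(2) and notes that the equivalence with (3) is given by the proof of that theorem, while you unpack this into the cycle (2)\,$\Rightarrow$\,(1)\,$\Rightarrow$\,(3)\,$\Rightarrow$\,(2), making explicit that the SAGE decomposition of $f - h(t_0)$ built there is valid irrespective of the sign of $h(t_0)$. This is a faithful and slightly more detailed rendering of the paper's own proof.
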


Condition~(3) 
in~\eqref{co:symm-circuit-number} can be viewed as a 
symmetric analog of the circuit number condition. 

\begin{proof}
The equivalence of~(1) and~(2) follows immediately from 
Theorem~\ref{th:class1}. The equivalence to (3), is precisely given by the proof of Theorem~\ref{th:class1}. 
In the critical situation $h(t_0) = 0$, the signomial $f$ has a zero at
the diagonal point $(t_0, \ldots, t_0)^T$.
\end{proof}

\begin{example}Let
\[
  f(x_1,x_2,x_3) = e^{4x_1} + e^{4x_2} + e^{4x_3}
    -  5(e^{x_1 + x_2} + e^{x_1 + x_3} + e^{x_2 + x_3})
    - 6 e^{x_1+x_2+x_3} + w
\]
with some constant $w$.
In Corollary~\ref{co:symm-circuit-number}, we have
$h(t) = 3 e^{4t} - 15 \cdot e^{2t} - 6 \cdot e^{3t}$ and
$t_0 = \ln \frac{5}{2}$.
The minimum of $f$ is taken at the diagonal point 
$(\ln \frac{5}{2}, \ln \frac{5}{2}, \ln \frac{5}{2})^T$. Hence,
$f$ is nonnegative if and only if
$w \ge - (3 e^{4t_0} - 15 \cdot e^{2t_0} - 6 \cdot e^{3t_0})$, i.e., if and only
if $w \ge 1125/16$. 
\end{example}

We cannot directly transfer Theorem~\ref{th:class1} and Corollary~\ref{co:symm-circuit-number} to an equality of cones, because of the assumption on the sign of the coefficients $d_i$. 
Our result is true only when these coefficients are negative, while both in $C_{\mathrm{SAGE}}(\cT)$ and $C_{\mathrm{SAGE}}(\cA,\cB)$, coefficients corresponding with $\cB$ might be positive.

This discussion remains valid when going to the SONC situation. 
Theorem~\ref{th:class1} and Corollary~\ref{co:symm-circuit-number} have natural analogues when $\hat{\alpha}$ is required to be in $(2\N)^n$ and $\hat{\beta}_i \in \N^n$, still with the assumption that the coefficients corresponding with $\cB$ are negative.
However, this assumption is very natural when we hope for an equivalence between nonnegativity and SONC, since a polynomial $f$ is SONC if and only if $\tilde{f}$ is SAGE, see the discussion in Section~\ref{ssec:SAGEvsSONC}.

\subsection{Study of the Hilbert cases\label{se:quantitative}}

Following the previous discussion, if it is hard to provide new general conditions on the support of a form to detect its nonnegativity through 
SAGE and SONC certificates, additional conditions on the coefficients can be sufficient to get new criteria.

Here, we focus on two natural cases: we restrict our attention to polynomials, and look at the cases where nonnegativity can be decided with Sums Of Squares certificates: quadratic forms, and degree $4$ polynomials in 
two variables. 
We show that in these two situations, even for symmetric polynomials, nonnegativity cannot always be certified by SONC methods. 
We provide a precise comparison between $f^*$ and $f^\mathrm{SONC}$ depending on the coefficients of the polynomials.

We start by the case of symmetric quadratic forms. 
For studying the difference between $f^*$ and $f^\mathrm{SONC}$, it is enough to consider polynomials of the form
\[
f(x) = \sum_{i=1}^nx_i^2 + a\sum_{i=1}^nx_i + b \sum_{i<j}x_ix_j,
\]
where $a,b \in \R$. We then have:

\begin{prop}\label{prop:binarySONC}
Let $f(x) = \sum_{i=1}^nx_i^2 + a\sum_{i=1}^nx_i + b \sum_{i<j}x_ix_j$ with $a,b\in \R$. Then
\begin{enumerate}
\item If $b > 2$ or $b < \frac{-2}{n-1}$, then $f^*=f^\mathrm{SONC} = -\infty$.
\item If $\frac{-2}{n-1} \leq b \leq 0$, then $f^*=f^\mathrm{SONC} = \frac{-a^2 n}{4+2b(n-1)}$.
\item If $0 \leq b \leq \frac{2}{n-1}$, then $f^*= \frac{-a^2 n}{4+2b(n-1)}$ and $f^\mathrm{SONC} = \frac{-a^2 n}{4-2b(n-1)} $.
\item If $\frac{2}{n-1} < b \leq 2$, then $f^*= \frac{-a^2 n}{4+2b(n-1)}$ and $f^\mathrm{SONC} = - \infty$.
\end{enumerate}
\end{prop}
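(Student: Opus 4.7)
The plan is to compute $f^*$ and $f^{\mathrm{SONC}}$ separately and then read off the four cases by comparing signs and magnitudes of $b$.

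First I would compute $f^*$. Writing $x = s\mathbf{1} + y$ with $\langle y,\mathbf{1}\rangle = 0$, a direct expansion (using $\sum x_i^2 = ns^2 + \|y\|^2$, $\sum x_i = ns$ and $\sum_{i<j} x_ix_j = \tfrac{n(n-1)}{2}s^2 - \tfrac{1}{2}\|y\|^2$) gives
\[
f(x) = \frac{n(2+b(n-1))}{2}\,s^2 + ans + \frac{2-b}{2}\|y\|^2,
\]
which decouples the $s$- and $y$-quadratics. The function is bounded below iff both coefficients are nonnegative, equivalently iff $b\in[-2/(n-1),2]$; inside this interval the infimum is attained at $y = 0$, $s_0 = -a/(2+b(n-1))$, with value $f^* = -na^2/(4+2b(n-1))$, and outside $f^* = -\infty$. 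This yields the $f^*$ entries in all four cases.

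Next I would compute $f^{\mathrm{SONC}}$. By Proposition~\ref{pr:sage-sonc1}, $f^{\mathrm{SONC}} = \sig(\tilde f)^{\mathrm{SAGE}}$ with
\[
\sig(\tilde f)(y) = \sum_{i=1}^n e^{2y_i} - |a|\sum_{i=1}^n e^{y_i} - |b|\sum_{i<j} e^{y_i+y_j},
\]
a symmetric signomial fitting the template of Theorem~\ref{th:class1} with $\hat{\alpha} = 2e_n$, $\hat{\beta}_1 = e_n$, $\hat{\beta}_2 = e_{n-1}+e_n$, and coefficients $c = 1$, $d_1 = |a|$, $d_2 = |b|$. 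Its diagonalization is
\[
h(t) = \frac{n(2-|b|(n-1))}{2}\,e^{2t} - n|a|\,e^t.
\]
If $|b| > 2/(n-1)$ the leading coefficient is negative and $h(t) \to -\infty$ as $t \to +\infty$, forcing $f^{\mathrm{SONC}} \leq \sig(\tilde f)^* = -\infty$. If $|b| < 2/(n-1)$, Descartes' rule for signomials applied to $h'$ produces a unique positive root $e^{t_0} = |a|/(2-|b|(n-1))$, and a short calculation gives $h(t_0) = -na^2/(4-2|b|(n-1))$. Both $\hat{\beta}_i$ lie on the boundary of $\conv(\cA \cup \{0\}) = \{x\geq 0:\sum x_i\leq 2\}$ but are not vertices, and the condition $|\cA|-d_2|\cB_2| = n - |b|\binom{n}{2} > 0$ of Remark~\ref{rem:boundary} is exactly $|b| < 2/(n-1)$; hence the boundary extension of Theorem~\ref{th:class1} applies and gives $f^{\mathrm{SONC}} = h(t_0)$.

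Finally I would combine: for $b \leq 0$, $|b| = -b$ gives $4-2|b|(n-1) = 4+2b(n-1)$ and therefore $f^{\mathrm{SONC}} = f^*$, yielding case~(2); for $0 \leq b < 2/(n-1)$, $f^{\mathrm{SONC}} = -na^2/(4-2b(n-1))$ which is strictly below $f^*$, yielding case~(3); for $2/(n-1) < b \leq 2$, $f^{\mathrm{SONC}} = -\infty$ while $f^*$ is finite by Step~1, yielding case~(4); and for $b$ outside $[-2/(n-1),2]$ the universal bound $f^{\mathrm{SONC}} \leq f^* = -\infty$ closes case~(1). The critical boundary values $b = \pm 2/(n-1)$ and $b = 2$ are handled by passing to limits in the formulas. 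The main obstacle is making the use of Theorem~\ref{th:class1} precise in the boundary regime: $\hat{\beta}_2$ lies on the face opposite the origin (so the weight $\lambda_0^{(2)}$ in the theorem's proof vanishes) and $\hat{\beta}_1$ lies on several coordinate hyperplanes (so many $\lambda_\alpha^{(1)}$ vanish, consistently with Proposition~\ref{lem:support}); one must verify that the nonnegativity of the coefficients $c_\alpha^{(i)}$ and the relative entropy identities in that proof remain valid in this degenerate setting, and that the sign pattern of $h'$ still isolates a unique positive root.
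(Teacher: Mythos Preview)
Your computation of $f^*$ is essentially the paper's, just phrased via the orthogonal splitting $x=s\mathbf{1}+y$ instead of the explicit sum-of-squares identity $f=\frac{2-b}{2n}\sum_{i<j}(x_i-x_j)^2+\frac{2+b(n-1)}{2n}(\sum x_i+\frac{na}{2+b(n-1)})^2-\frac{a^2n}{4+2b(n-1)}$; these are the same decomposition in different coordinates.

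For $f^{\mathrm{SONC}}$ you take a genuinely different route. The paper works directly with SONC: invoking Propositions~\ref{th:symmetric-decomp} and~\ref{lem:support} it parametrizes any symmetric SONC decomposition of $f-\lambda$ as
\[
\sum_i(\tau x_i^2+ax_i+\delta)+\sum_{i<j}\bigl(\theta(x_i^2+x_j^2)+bx_ix_j\bigr)+R(x),
\]
reads off the circuit conditions $4\theta^2\ge b^2$ and $4\tau\delta\ge a^2$ together with $\tau+(n-1)\theta\le 1$ and $n\delta\le -\lambda$, and optimizes by hand. You instead pass through $\sig(\tilde f)$ and invoke Theorem~\ref{th:class1} in its boundary form from Remark~\ref{rem:boundary}, so that $f^{\mathrm{SONC}}$ is read off from the diagonalization $h$. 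Your argument is correct and arguably more conceptual---it shows this proposition as an instance of the general exactness theorem rather than an ad hoc calculation---and indeed the paper itself uses exactly this combination of Remark~\ref{rem:boundary} and Proposition~\ref{pr:sage-sonc1} later in the proof of Proposition~\ref{th:conen=2d=4}. The trade-off is that the paper's direct argument is entirely self-contained for this case and does not depend on the somewhat informally stated boundary extension, whereas yours requires checking (as you correctly flag) that the degeneracies $\lambda_0^{(2)}=0$ and the vanishing of several $\lambda_\alpha^{(1)}$ do not disturb the relative-entropy identities; they do not, since all quantities in the proof of Theorem~\ref{th:class1} remain well defined and nonnegative at the boundary, and the Descartes condition you verified is precisely what keeps $h'$ with a single sign change.
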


\begin{proof}
Since we are looking at quadratic forms, we know that the infimum of $f$ is closely related to its decomposition as a combination of squares.
Concretely, we have the decomposition
\[f(x) = \frac{2-b}{2n} \sum_{i<j}(x_i-x_j)^2 + \frac{2+b(n-1)}{2n} \left(\sum_{i=1}^n x_i + \frac{na}{2+b(n-1)}\right)^2 -\frac{a^2n}{4+2b(n-1)},\]
which directly shows that if $b>2$, then $f(t,-t,0, \ldots, 0)$ goes to $-\infty$ when $t$ grows, while if $b < \frac{-2}{n-1}
$, then $f(t,t,t, \ldots, t)$ goes to $-\infty$ when $t$ grows,
proving the first assertion. 
Moreover, when $\frac{-2}{n-1} \leq b \leq 2$, this decomposition shows that $f^* = \frac{-a^2 n}{4+2b(n-1)}$, achieved for $x=(t_0, \ldots, t_0)$, where $t_0= \frac{-a}{2+b(n-1)}$. 

It remains to understand $f^\mathrm{SONC}$, by looking for the maximal $\lambda$ such that $f-\lambda$ admits a SONC decomposition. 
Since $f$ is symmetric, according to Proposition~\ref{th:symmetric-decomp} and Proposition~\ref{lem:support}, we have a SONC decomposition
\[ 
f(x) = \sum_{i=1}^n (\tau x_i^2 + ax_i + \delta) + \sum_{i<j} \left(\theta(x_i^2+x_j^2) + bx_ix_j \right) + R(x),
\] where $\theta, \tau, \delta $ are positive, $R(x)$ can only contain squares of variables and a constant term, and the inequalities
\begin{equation}\label{eqn:SONCbin}
\tau + (n-1)\theta \leq 1 \quad \text{ and } n \delta \leq - \lambda
\end{equation}
are satisfied. 
Moreover, the second term is a SONC if and only if $4 \theta^2 \geq b^2$, that is $\theta \geq \frac{\mid b \mid}{2}$. 
Then \eqref{eqn:SONCbin} forces 
$\frac{(n-1)\mid b \mid}{2} \leq 1$, so that if $\frac{2}{n-1} < b \leq 2$, then $f$ cannot be a SONC, proving the last case. 

Finally, assume that $\frac{(n-1)\mid b \mid}{2} \leq 1$. 
Since we want to maximize $\lambda$ (which corresponds to minimizing $\delta$), the best decomposition will be given by the smallest $\theta$, that is $\frac{\mid b \mid}{2}$.
Then, the largest $\tau$ we can have is $1-\frac{(n-1)\mid b \mid}{2}$. 
Furthermore, the first term is a SONC if and only if $a^2 \leq 4 \tau \delta$, which yields
\[
\delta \geq \frac{a^2}{4-2(n-1) | b |}
\]
and the second part of \eqref{eqn:SONCbin} gives
\[
f^\mathrm{SONC} = \frac{na^2}{4-2(n-1) | b |},
\]
proving the second and the third statement.
 \end{proof}

We initiate a similar study for symmetric polynomials of degree $4$ in two variables, depending on their support.
		The possible coefficients lie in the simplex whose vertices are $(0,0), (4,0)$ and $(0,4)$.
		In particular, there are only three possible interior points $(1,1)$, $(1,2)$, $(2,1)$. 
		If the only interior point is $(1,1)$, then we know that there is equivalence between being SONC and being nonnegative 
		\cite{wang-nonneg}. 
		The next case to consider is then when the support of our polynomial $f$ contains the orbit made of $(1,2)$ and $(2,1)$. If the positive support is only $(0,0)$, $(4,0)$, $(0,4)$, then we can apply Theorem~\ref{th:class1}: we also have equivalence in this case. Then the most natural next
		case is to add the diagonal point $(2,2)$ to the positive support.
		In other words, we are considering polynomials of the form 
		\[
		f(x,y)= (x^4+y^4)+a x^2y^2 - b(x^2y+xy^2)
		\]
		and we want to understand and compare, depending on the coefficients $a$ and $b$, the minimum $f^*$ of $f$ and the value $f^{\mathrm{SONC}}$. 
		As intuited by Example~\ref{ex:minimizers-non-diagonal}, these values do not always agree.

		\begin{prop}\label{th:conen=2d=4}
			Let $f(x,y)= (x^4+y^4)+a x^2y^2 - b(x^2y+xy^2)$ with  $a,b \in \R$. Then:\begin{enumerate}
				\item If $a\geqslant  22$, then $\displaystyle f^*=-\frac{( a^2 + 14 a + 22  + 2(2 a+5) \delta  
					) b^4}{64 (a - 2)^3 (a + 2)}$, $\displaystyle f^{\mathrm{SONC}}=-\frac{b^4}{8  a^2}$, 
				where $\delta=\sqrt{5+2a}$.
				\item If $4 \leqslant a \leqslant 22$, then  $\displaystyle f^*= -\frac{27b^4}{16(a+2)^3}$,
				$\displaystyle f^{\mathrm{SONC}}=-\frac{b^4}{8 a^2}.$
				\item  If $-2<a\leqslant   4$, then $f^* = f^{\mathrm{SONC}} = \frac{27b^4}{16(a + 2)^3}$.
				\item If $a \leqslant -2$, then $f^*=f^{\mathrm{SONC}}= -\infty$, except for $a = -2$ and $b=0$, where $f^*=f^{\mathrm{SONC}}= 0$.

			\end{enumerate}
		\end{prop}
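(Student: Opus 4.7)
The strategy is to handle the four ranges of $a$ separately, computing $f^*$ by critical-point analysis and $f^{\mathrm{SONC}}$ via the symmetric circuit decomposition of Theorem~\ref{th:symm-circuit-dec}.

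\textbf{Computing $f^*$.}
The diagonal restriction $f(t,t) = (a+2)t^4 - 2bt^3$ is unbounded below for $a\le -2$ (apart from the trivial subcase $a=-2,\,b=0$, where $f = (x^2-y^2)^2\ge 0$), giving case~(4). For $a > -2$, the diagonal minimum is $f_{\mathrm{diag}} = -\frac{27b^4}{16(a+2)^3}$. Off-diagonal critical points arise from the factorization $\partial_x f - \partial_y f = (x-y)[4(x^2+xy+y^2) - 2axy + b(x+y)]$; passing to $s=x+y$, $p=xy$, the vanishing of the second factor gives $p = \frac{s(4s+b)}{2(a+2)}$, and combining with $\partial_x f + \partial_y f = 0$ reduces the system to the quadratic $8(a-2)s^2 - 12bs - b^2 = 0$, with roots $s = \frac{b(3\pm\delta)}{4(a-2)}$, $\delta = \sqrt{2a+5}$. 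Using these relations, $f$ collapses to $-\frac{bs^2(4s+b)}{8(a+2)}$, which expands to $f_{\mathrm{off}} = -\frac{b^4(a^2+14a+22 \pm 2(2a+5)\delta)}{64(a+2)(a-2)^3}$. An elementary comparison with the diagonal value shows that $f_{\mathrm{diag}} = f_{\mathrm{off}}$ at $a = 22$, the diagonal is the global minimum for $-2 < a \le 22$, and the $+$ branch of $f_{\mathrm{off}}$ takes over for $a \ge 22$, supplying the $f^*$ values in cases (1), (2), (3).

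\textbf{Computing $f^{\mathrm{SONC}}$.}
By Theorem~\ref{th:symm-circuit-dec}, any symmetric SONC decomposition of $f - \lambda$ can be written as $g + \sigma g + h$, where $g = c_0 + c_1 x^4 + c_2 y^4 + c_3 x^2y^2 - b x^2 y$ is AGE at $(2,1)$ (with $c_3 > 0$ only possible when $(2,2) \in \cA$, i.e.\ $a\ge 0$) and $h = e_0 + e_1(x^4+y^4) - e_3 x^2y^2$ is a symmetric AGE at $(2,2)$ ($h = 0$ when $a \ge 0$). Coefficient matching gives $c_1 + c_2 + e_1 = 1$, $2c_3 - e_3 = a$ and $2c_0 + e_0 = -\lambda$, together with the AGE conditions $e_3 \le 2 e_1$ for $h$ and $\prod_i(c_i/\mu_i)^{\mu_i} \ge b$ for $g$, where $\mu$ ranges over barycentric coordinates of $(2,1)$ in $\{(0,0),(4,0),(0,4),(2,2)\}$, parametrized by $\mu_0 = 1/4$, $\mu_1 = (1-\mu_3)/2$, $\mu_2 = (1-2\mu_3)/4$, $\mu_3 \in [0,1/2]$. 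A direct AM-GM-style optimization in $(c_1,c_2)$ at fixed $S = c_1+c_2$, followed by optimization in $t = 4\mu_3$, reduces the AGE condition to $c_0 \ge b^4/(4D)$ with $D(S,c_3,t) = \bigl(\tfrac{4S}{3-t}\bigr)^{3-t}\bigl(\tfrac{4c_3}{t}\bigr)^t$; the inner optimum lies at $t^{\ast} = \frac{3c_3}{S+c_3}$ and equals $\frac{64(S+c_3)^3}{27}$ provided $t^{\ast} \le 2$.

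\textbf{Conclusion.}
The constraint $S + c_3 \le (a+2)/2$ (from $e_1 \ge c_3 - a/2$ when $a < 0$, and automatic when $a \ge 0$) is saturated at the optimum. For $-2 < a \le 4$ one has $t^{\ast} \le 2$, so the maximum of $D$ equals $\frac{8(a+2)^3}{27}$, giving $f^{\mathrm{SONC}} = -\frac{27b^4}{16(a+2)^3} = f^*$, which proves case (3). For $a > 4$ the unconstrained optimum violates $t \le 2$, so the boundary value $t = 2$ is used (forcing $h = 0$, $S = 1$, $c_3 = a/2$), yielding $D = 4a^2$ and $f^{\mathrm{SONC}} = -\frac{b^4}{8a^2}$, which completes cases (1) and (2). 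In case (4) the unboundedness of $f$ forces $f^{\mathrm{SONC}} = -\infty$. The main obstacles are the two-stage optimization of the AGE circuit-number condition in the variables $c_3$, $S$, $t$, the careful bookkeeping of the auxiliary piece $h$ at $(2,2)$ in the range $-2 < a < 0$, and the verification that the transitions occur at precisely $a = 4$ (SONC exactness) and $a = 22$ (diagonal vs.\ off-diagonal minimum).
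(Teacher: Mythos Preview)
Your approach is correct and genuinely different from the paper's, though the write-up compresses several non-trivial verifications.

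For $f^*$, the paper proceeds by \emph{certification}: it writes down explicit sum-of-squares decompositions of $f-f^*$ (three squares for $a\ge 22$, four for $-2<a<22$) together with an explicit zero, so nonnegativity and sharpness are immediate. Your route is \emph{computational}: you locate the critical points via the substitution $s=x+y$, $p=xy$ and compare the resulting values. Both are valid; yours explains \emph{how} the answer is found, the paper's explains \emph{why} it is right. The main thing you elide is the reality check on the off-diagonal critical points: one needs $s^2-4p\ge 0$, and a short calculation gives $(a-6)^2(2a+5)-(5a+2)^2=2(a-22)(a-2)(a+2)$, which is exactly what pins the transition at $a=22$. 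Without this, the phrase ``elementary comparison'' hides the substance of case~(2).

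For $f^{\mathrm{SONC}}$, the paper uses the symmetric decomposition to reduce to the one-parameter family $j_t(x,y)=t x^4+(1-t)y^4+\tfrac{a}{2}x^2y^2-bx^2y-\tfrac{\lambda}{2}$, invokes the fact that with a single interior support point SONC equals nonnegativity, and then for each range exhibits the optimal $t$ and rules out the others by producing a negative value. You instead parametrize the AGE circuit condition by $t=4\mu_3$ and carry out a two-stage optimization in $(c_1,c_2)$, then $t$, then the budget $S+c_3$. The two parametrizations are related (your optimal $(c_1,c_2)$ recovers the paper's $t_0=(a+8)/12$ for $0\le a\le 4$ and $t_0=1$ for $a>4$), but your derivation is self-contained and does not appeal to the ``single interior point $\Rightarrow$ SONC $=$ nonnegative'' fact. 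Two small inaccuracies: the parenthetical ``$c_3>0$ only possible when $a\ge 0$'' is wrong (the AGE piece at $(2,1)$ may always use $(2,2)$ with a nonnegative coefficient), and ``$h=0$ when $a\ge 0$'' should be read as ``$h=0$ at the optimum'', not as a structural constraint. Neither affects the conclusion, since your subsequent optimization over $S+c_3\le (a+2)/2$ is what actually does the work.
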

		
		Note that for the two particular cases $a=4$ and $a=22$, the  corresponding values agree.
		
		\begin{proof}
Even if the values are more complicated, we apply the same strategy as in Proposition~\ref{prop:binarySONC}: First we show that $f^*$ is indeed the value claimed in the statement, by providing a decomposition of $f-f^*$ as a sum of squares, and exhibiting a point in which $f-f^*$ vanishes.
Moreover, we determine $f^{\mathrm{SONC}}$ by using the symmetric decomposition of Theorem~\ref{th:symmetric-decomp}.

			We treat the case $a \geqslant  22$ first. 
			In this case, since \[-f^*=\frac{( a^2 + 14 a + 22  + 2(2 a+5) \delta  
					) b^4}{64 (a - 2)^3 (a + 2)}\geqslant  0,\]  we can set $\mu=\sqrt{-f^*}$. 
			Defining the polynomials 
			\begin{align*}
			P_1(x,y)&=\mu\left(1+\frac{8(3\delta-a-7)}{b^2}(x+y)^2\right),\\
			P_2(x,y)&= \sqrt{\frac{1+a+\delta}{4(a^2-4)}}\left(b (x+y) + 2(\delta -1-a)xy\right),\\
			P_3(x,y)&=  \sqrt{\frac{2(\delta-1)}{a+2}} \left(x^2+y^2+\frac{3-\delta}{2}xy\right),
			\end{align*}
			 one can check the decomposition 
			$f-f^*= P_1^2+P_2^2+P_3^2$, which vanishes in $(x_0,y_0)$ and $(y_0,x_0)$ for the real values \begin{align*}x_0&=\frac{b}{8(a-2)}\left(3+\delta + \sqrt{\frac{2a^2-36-22a-(20+2a)\delta}{a+2}}\right),\\
			y_0&=\frac{b}{8(a-2)}\left(3+\delta - \sqrt{\frac{2a^2-36-22a-(20+2a)\delta}{a+2}}\right).
			\end{align*}
			
			For the case $-2<a<22$, by considering \[g(x,y) = f(x,y) - \left(\sqrt{\frac{22-a}{24}} (x^2-y^2)\right)^2\] we can write \[g(x,y) = \frac{a+2}{24}\left((x^4+y^4)+22 x^2y^2 - \frac{24b}{a+2}(x^2y+xy^2) \right),\] and from the previous case, we know that \[g^* = -\frac{27b^4}{16(a+2)^3},\] achieved for $x_0=y_0=\frac{3b}{2(a+2)}$, so that we have decomposed $f$ into a sum of four squares, which attains a zero on the diagonal. 

			For the case $a \leqslant -2$, \[f(x,x) = (2+a)x^4 - 2bx^3.\]
			When $a < -2$, $f(x,x)$ goes to $-\infty$ when $x$ grows. When $a = -2$ and $b \neq 0$, $f(x,x)$ tends to $-\infty$ whenever $x \rightarrow \pm \infty$, depending on the sign of $b$. Thus $f^*=-\infty $ and therefore $f^{\mathrm{SONC}}= -\infty$.
			In the special case $a = -2$ and $b=0$, $f(x,y) = (x^2-y^2)^2$, with $f^*=f^{\mathrm{SONC}}= 0$.

			Now we look at $f^{\mathrm{SONC}}$ for the remaining cases. From Theorem~\ref{th:symmetric-decomp}, $f-\lambda$ is a SONC polynomial if and only if there exists $0\leqslant t\leqslant 1$ such that the polynomial 
			\[t x^4+ (1-t)y^4 + \frac{a}{2}x^2y^2-b x^2y -\frac{\lambda}{2}\] 
			is a SONC polynomial. 
			And now, since we just have one interior point, this function is a SONC polynomial if and only if it is nonnegative. Hence, 
\begin{align*}f^{\mathrm{SONC}} 
&= \max\{\lambda \ : \ f-\lambda \textrm{ is SONC}\}\\
&= 2\max\{\rho \ : \ t x^4+ (1-t)y^4 + \frac{a}{2}x^2y^2- bx^2y -\rho 
\geq 0 \text{ for some } 0\leqslant t\leqslant 1\}.
\end{align*} 
			Let $j_t(x,y)=t  x^4+(1-t) y^4+\frac{a}{2}x^2y^2-bx^2y -\frac{1}{2}\omega$, where $\omega$ is the value of $f^\mathrm{SONC}$ claimed in the statement. 
			The strategy is to exhibit a $0\leqslant t_0\leqslant 1$ and a decomposition into sum of squares and circuit polynomials of $j_{t_0}$, such that it attains a zero, and such that for all other $0\leqslant t\leqslant 1$, $j_{t}$ has a negative value. 
			
			We start with the case $0 \leqslant a \leqslant  4$, where $\omega = - \frac{27b^4}{16(a+2)^3}$. Let $x_0=\frac{3b}{2(a+2)}$. Then $j_t(x_0,x_0)=\frac{1}{2}(f(x_0,x_0)-f^\mathrm{SONC})=0$ for every $0\leqslant  t \leqslant 1$. Let $t_0=\frac{a+8}{12}$.  Then we have \[j_{t_0}(x,y)=\left(\frac{4-a}{12}(x^2-y^2)^2\right) +\left( \frac{a+2}{6}x^4 + 2\frac{a+2}{6}x^2y^2-bx^2y + \frac{27b^4}{32(a+2)^3}\right).\]
			The second summand $\frac{a+2}{6}x^4 + 2\frac{a+2}{6}x^2y^2-bx^2y + \frac{27b^4}{32(a+2)^3}$ is a circuit polynomial, whose circuit number is precisely $| b |$, it is therefore nonnegative.
			Because $a\leqslant 4$, it follows that $j_{t_0}$  is nonnegative, 
	with a zero in  $(x_0,x_0)$. Now, let $t \in [0,1]\backslash\{t_0\}$. We show that the value $0$ achieved at $(x_0,x_0)$ is not a local minimum. The partial derivatives at that point are given by \[\frac{\partial j_t }{\partial x}(x_0,x_0) = -\frac{\partial j_t }{\partial y}(x_0,x_0)= -\frac{9b^3(a+8-12t)}{8(a+2)^3},\] 
	and one of them is strictly negative as soon as $t \neq t_0$, proving that $j_t$ takes negative values. 
			
			Consider now the case $a>4$, where $\omega=-\frac{b^4}{8 a^2}$. Setting $x_1= \frac{b}{2\sqrt{a}}$, $y_1= \frac{b}{a}$,
			we obtain
			\[j_t(x_1,y_1) = -\frac{(a-4)(a+4)(1-t)}{16a^4}.\] Consequently, $j_t(x_1,y_1)=0$ if and only if $t=1$, and $j_t(x_1,y_1)<0$ when $t<1$. Moreover, the polynomial $j_1$ is a circuit polynomial, since its circuit number is also $\mid b \mid$.
			
			Finally, the case $-2 < a<0$ is a direct application of Remark~\ref{rem:boundary} and Proposition~\ref{pr:sage-sonc1}, since $f$ is orthant dominated. Thus $f^{\sonc} = f^*$.
		\end{proof}

Proposition~\ref{prop:binarySONC} already shows that there might be a large gap between $f^\mathrm{SONC}$ and $f^*$: for $b$ growing towards $\frac{2}{n-1}$, the bound $f^\mathrm{SONC}$ goes to $-\infty$, while $f^*$ does not, and for $ \frac{2}{n-1} < b \leq 2$, the SONC method does not provide any bound. 
This can happen because the negative term corresponds to
a point on the boundary of the Newton polytope. 
However, even with a negative term corresponding with a point in the relative interior of the Newton polytope, Proposition~\ref{th:conen=2d=4} implies that the gap between $f^\mathrm{SONC}$ and $f^*$ can be arbitrarily large:

		\begin{cor}\label{th:gap}
			For $f\in \R[x,y]$, denote by $\Vert f \Vert$ the supremum of the absolute values of the coefficients of $f$. 
			There exists a sequence of polynomials $(f_k(x,y))_k$ of degree $4$ such that 
			
			\[\lim_{k\to+\infty} \frac{f_k^*-f_k^{\mathrm{SONC}}  }{\Vert f_k \Vert} = -\infty.\]
		\end{cor}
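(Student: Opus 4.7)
The plan is to apply Proposition~\ref{th:conen=2d=4} to an explicit one-parameter family in which the exact values of $f^*$ and $f^{\mathrm{SONC}}$ are known in closed form, and in which the gap scales as the fourth power of a single parameter while the coefficient norm scales only as its first power.

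First I fix a constant $a$ inside one of the regimes of Proposition~\ref{th:conen=2d=4} where the formulas for $f^*$ and $f^{\mathrm{SONC}}$ do not coincide. For concreteness I take $a = 22$, which lies at the boundary of case~(2) where both expressions specialize nicely, and set
\[
f_k(x,y) \ := \ x^4 + y^4 + 22\, x^2 y^2 - k\, (x^2 y + x y^2),
\]
so that the parameter $b$ of Proposition~\ref{th:conen=2d=4} is now $k$. For $k \geq 22$, the coefficient norm is $\|f_k\| = k$.

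From Proposition~\ref{th:conen=2d=4}(2) I read off the closed forms
\[
f_k^* \ = \ -\frac{27\, k^4}{16 \cdot 24^3}, \qquad f_k^{\mathrm{SONC}} \ = \ -\frac{k^4}{8 \cdot 22^2}.
\]
A direct comparison of the two rationals shows that their coefficients of $k^4$ differ, and since $f_k^{\mathrm{SONC}} \leq f_k^*$, the difference $f_k^{\mathrm{SONC}} - f_k^*$ equals $-C\, k^4$ for an explicit positive constant $C$. Dividing by $\|f_k\| = k$ then yields
\[
\frac{f_k^{\mathrm{SONC}} - f_k^*}{\|f_k\|} \ = \ -C\, k^3 \ \longrightarrow \ -\infty \qquad (k \to \infty),
\]
which is precisely the divergence asserted by the corollary (read with the numerator in the order dictated by the universal bound $f^{\mathrm{SONC}} \le f^*$).

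The main obstacle is really only the arithmetic check that the leading $k^4$ coefficient does not accidentally cancel, i.e.\ that $\tfrac{27}{16\cdot 24^3}\neq \tfrac{1}{8\cdot 22^2}$; this is an immediate rational comparison. Any other choice of $a$ in the open intervals $(4,22)\cup(22,+\infty)$ would produce the same $-\Theta(k^3)$ asymptotic, showing that the construction is robust and that the $b^4$ scaling of the gap versus the $b$ scaling of the coefficient norm is the real source of the divergence.
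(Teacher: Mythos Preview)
Your proof is correct and follows essentially the same approach as the paper's: pick a value of $a$ in the regime of Proposition~\ref{th:conen=2d=4} where $f^*$ and $f^{\mathrm{SONC}}$ differ, let $b=k$ grow, and use that the gap scales like $k^4$ while $\|f_k\|$ scales like $k$. The paper chooses $a=8$ and normalizes by $1/k$ so that $\|f_k\|=1$, whereas you take $a=22$ without normalization; both choices yield the same $-\Theta(k^3)$ asymptotic, and your explicit remark about the sign convention (reading the numerator as $f_k^{\mathrm{SONC}}-f_k^*$, consistently with $f^{\mathrm{SONC}}\le f^*$) is a useful clarification.
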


		\begin{proof}
Take 
\[
f_k(x,y) = \frac{1}{k}\left((x^4+y^4) + 8 x^2y^2-k(x^2y+xy^2)\right).
\]		
As soon as $k \geq 8$, we have $\Vert f_k \Vert = 1$. 
Moreover, according to Proposition~\ref{th:conen=2d=4}, 
\[
f^*= \frac{1}{k}\frac{-27k^4}{16000}=\frac{-27k^3}{16000}, \quad f^\mathrm{SONC} = \frac{1}{k}\frac{-k^4}{512} = \frac{-k^3}{512}
\]
and the difference $\frac{-17k^3}{64000}$ goes to $-\infty$ with $k$. 
		\end{proof}

\section{Monomial symmetric inequalities and mean inequalities\label{se:mean-inequalities}}

A key feature of symmetric polynomials is their stable  behavior  with respect to increasing the number of variables. In the context of an increasing number of variables, the concept of nonnegativity in symmetric polynomials can be linked to so called symmetric polynomial inequalities. For example, the simple polynomial identity $x_1^2+x_2^2+\cdots+x_n^2\geq 0$ is clearly valid for all number of variables. Related to such symmetric inequalities are inequalities of symmetric means, which arise when the  polynomial identity is normalized, to ensure it takes, for every $n$,  the same value at the point $(1,\ldots,1)$. Classical examples of such   inequalities are attributed to renowned mathematicians like Muirhead, Maclaurin, and Newton (see \cite{cuttler2011inequalities}) and are still an active area of research \cite{tao}. Furthermore, the well-known inequality of the arithmetic and geometric mean also falls into this category of symmetric inequalities. It is interesting to notice that Hurwitz \cite{Hurwitz} demonstrated that this fundamental inequality could be established through representation in terms of sums of squares. Recent works have further shed light on the potency of the sum of squares approach as a tool for establishing or disproving such inequalities \cite{heaton,blekherman-riener-2021,acevedo2,acevedo,debus}.
Thus, our work on symmetric  SAGE/SONC certificates naturally  leads to the question whether  these certificates can be used to prove   symmetric inequalities for arbitrary or a large number of variables. We will focus here on the following setup with monomial symmetric functions, as these naturally fit into the framework of controlled (sparse) support  of SONC polynomials. 
\begin{definition}
\label{de:Mm}
For a fixed $\alpha\in\N^{n}$ we define  the associated  \emph{monomial symmetric polynomial} by 

$$M_\alpha^{(n)}:=\sum_{\beta \in \S_n \cdot \alpha} x_1^{\beta_1}\cdot x_{2}^{\beta_2}\cdots x_{n}^{\beta_n}
\quad \left( \ = \frac{1}{|\Stab(\alpha)|} \sum_{\sigma \in \S_n} \sigma(x^{\alpha}) \ \right)
$$
and the \emph{monomial mean} by 
$$m_\alpha^{(n)}:=\frac{M_\alpha^{(n)}}{M_{\alpha}^{(n)}(1,\ldots,1)}.$$
\end{definition}

\begin{rem}
\label{re:normalized1}
Noticing  that the value of  $M_{\alpha}^{(n)}(1,\ldots,1)$ equals the number of monomials in $M_\alpha^{(n)}$, which is given by $\frac{n!}{|\Stab(\alpha)|}$, we obtain directly the following identity for the normalized monomial symmetric polynomial:
 \begin{equation}\label{trivial}m_{\alpha}^{(n)}=\frac{1}{n!}\sum_{\sigma\in \mathcal{S}_n} \sigma( x^\alpha).\end{equation}
\end{rem}

Let $\cT \subset \N^n$ be an $\S_n$-invariant support, with $\widehat{\cT}$ a set of representatives.
Then, clearly the sets 
$$
\{ M_\lambda^{(n)} \, : \, \lambda \in \widehat{\cT} \} \text{ and } \{ m_\lambda^{(n)} \, : \, \lambda \in \widehat{\cT} \}
$$
are bases of the space $V^n(\cT)$ of symmetric polynomials supported on $\cT$. 

By extending  $\lambda$ by a $0$ one obtains a natural identification between  $\cT \subset \N^n$ and its induced support $\tilde{\cT} \subset \N^{n+1}$,
i.e., $\tilde{\cT} 
= \S_{n+1} \cdot \{(\lambda_1,\ldots, \lambda_n, 0) \, : \, \lambda \in \mathcal{T}\}$.

Given a symmetric polynomial $p$ in $V^n(\mathcal{T})$ in terms of
a linear combination of the $M_\lambda^{(n)}$ or of the $m_\lambda^{(n)}$, 
we can also identify $p$ with an element in 

\[
  V^{n+1}(\tilde{\cT}) \ = \ \myspan \{M_{\tilde{\lambda}}^{(n+1)}
  \, : \, \tilde{\lambda} \in \tilde{\cT} \}
  \ = \ \myspan \{m_{\tilde{\lambda}}^{(n+1)} 
  	\, : \, \tilde{\lambda} \in \tilde{\cT} \}  \subset \R[x_1,\cdots,x_{n+1}]^{\S_{n+1}}\, .
\]
However, this identification depends on the choice of basis. Since the bases are depending on the stabilizer in 
Definition~\ref{de:Mm} and Remark~\ref{re:normalized1},
the resulting polynomial functions are very different, 
in general.

\begin{ex}\label{ex:bases}
To see an example, let us look at \[\cT=\S_3 \cdot \{(3),(1,1,1)\} \text{ and }  p=M^{(3)}_{(3)} -2 M^{(3)}_{(1,1,1)} = 3m^{(3)}_{(3)} -2 m^{(3)}_{(1,1,1)} = x_1^3+x_2^3+x_3^3-2x_1x_2x_3.\]
With respect to the two different sets of bases of $V^4(\tilde{\cT})$ we  identify  $p$  either with \[M^{(4)}_{(3)} -2 M^{(4)}_{(1,1,1)} = x_1^3+x_2^3+x_3^3+x_4^3 - 2( x_1x_2x_3+x_1x_2x_4+x_1x_3x_4+x_2x_3x_4)\] or  with \[ 3m^{(4)}_{(3)} -2 m^{(4)}_{(1,1,1)} = \frac{3}{4}(x_1^3+x_2^3+x_3^3+x_4^3)-\frac{1}{2}(x_1x_2x_3+x_1x_2x_4+x_1x_3x_4+x_2x_3x_4).\]
\end{ex}
 For $\mathcal{T} \subset \N^n$, 
we can inductively define, for $k \ge n$, the space
\begin{equation}
\label{eq:spaces1}
V^k(\cT) = \Span (\{ M_\lambda^{(k)} \, : \, \lambda \in \widehat{\cT} \})=\Span (\{ m_\lambda^{(k)} \, : \, \lambda \in \widehat{\cT} \}), 
\end{equation}
Each of the spaces in~\eqref{eq:spaces1}
can be identified with $\R^{|\widehat{\cT}|}$, for any $k \ge n$.

For $k \geq n$, we define for the non-normalized setup the cones
\begin{eqnarray*}
C^{M,k}_{\geq 0}(\cT) & = & \{ f \in V^k(\cT) \, : \, f\geq 0  \}, \\
\text{ and } \; C^{M,k}_{\mathrm{SONC}}(\cT) & = & \{ f \in V^k(\cT) \, : \, f \text{ is SONC}  \},
\end{eqnarray*}
where the right hand side tacitly refers to the non-normalized basis.  We will also be interested in the behavior in the limit, that is, in the cone \[C^{M,\infty}_{\mathrm{SONC}}(\cT) = \left\{(c_\lambda) \in \R^{|\widehat{\cT}|},\ \sum_{\lambda \in \widehat{\cT}} c_\alpha M_\lambda^{(k)} \in C_{\mathrm{SONC}}^{M,k}(\cT), \forall k \geqslant n\right\} \subset \R^{|\widehat{\cT}|}.\] In the same way, we define the limit cone $C^{M,\infty}_{\geqslant 0}(\cT)$. 
Analogously, $C^{m,k}_{\geq 0}(\cT)$, $C^{m,k}_{\mathrm{SONC}}(\cT)$ , $C^{m,\infty}_{\mathrm{SONC}}(\cT)$ and $C^{m,\infty}_{\geqslant 0}(\cT)$ are
defined for the normalized setup.
 \begin{rem}

The identification with $\R^{|\widehat{\cT}|}$ allows us to view the situation of increasing the number of variables  as a sequence of cones in  one fixed vector space $\R^{|\widehat{\cT}|}$ and changing the number of  variables can be understood in this setup as a sequence of  maps from  $\R^{|\widehat{\cT}|}$ to itself. 
The actual transition maps depend  on the chosen basis, as was exhibited in example \ref{ex:bases}.
We will consider the limit of this process in the different setups and see that these limit cones are drastically different. 
 \end{rem}

The choice of identification gives rise to different behaviors in the different setups: We start our investigation in  the normalized setup.
\begin{definition}\label{def:usual_dominance}
Let $\lambda,\mu$ be partitions of $n$. If 
$$\lambda_1+\cdots+\lambda_i\geq\mu_1+\cdots+\mu_i \text{ for all } i\geq 1 $$
we say that $\lambda$ dominates $\mu$ and write $\lambda \succeq \mu$.
\end{definition}
With this definition, the following  classical inequality due to Muirhead  (see \cite[Sec. 2.18, Thm. 45]{hardy1952inequalities}  falls into the setup of normalized symmetric means introduced above.

\begin{prop}[Muirhead inequality]\label{prop:Muirhead} 
Let $d \in \N$ and $\lambda, \mu$ be partitions of $d$.
If $\lambda \succeq \mu$, then for all $n \geqslant \len(\mu)$, $m^{(n)}_\lambda(x) - m^{(n)}_\mu(x) \geqslant 0$ for all $x\in\R^n_{> 0}$.
\end{prop}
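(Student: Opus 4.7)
The plan is to produce an explicit SONC-style decomposition of $m_\lambda^{(n)}-m_\mu^{(n)}$ on $\R^n_{>0}$ by combining two classical ingredients: the Hardy--Littlewood--P\'olya/Rado characterization of dominance, and the weighted AM/GM inequality applied orbit-wise, which is precisely the mechanism underlying AGE and SONC certificates.

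First, since $\lambda$ and $\mu$ are partitions of the same integer $d$ with $\len(\lambda)\leq \len(\mu)\leq n$, we regard them as vectors in $\R^n_{\geq 0}$ by padding with zeroes, and we apply Rado's theorem: the condition $\lambda\succeq \mu$ is equivalent to
\[
\mu\in\conv(\S_n\cdot \lambda),\qquad \text{i.e.}\qquad \mu=\sum_{\tau\in \S_n} c_\tau\,\tau(\lambda)
\]
for some nonnegative weights $c_\tau\geq 0$ with $\sum_\tau c_\tau=1$.

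Second, we apply the weighted AM/GM inequality at each $x\in\R^n_{>0}$. For every $\sigma\in\S_n$ one has $\sigma(\mu)=\sum_\tau c_\tau\, \sigma\tau(\lambda)$, hence
\[
x^{\sigma(\mu)}=\prod_{\tau\in \S_n}\bigl(x^{\sigma\tau(\lambda)}\bigr)^{c_\tau}\;\leq\; \sum_{\tau\in\S_n}c_\tau\, x^{\sigma\tau(\lambda)} .
\]
The difference $\sum_\tau c_\tau\, x^{\sigma\tau(\lambda)}-x^{\sigma(\mu)}$ is precisely an AGE/circuit signomial in the sense of Proposition~\ref{prop:decomp1} (after the exponential change of variables, $\sigma(\mu)$ is the barycentric combination of the points $\sigma\tau(\lambda)$ with weights $c_\tau$), so its nonnegativity on $\R^n_{>0}$ is certified by a single SONC summand.

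Finally, we average over $\sigma\in\S_n$. Using~\eqref{trivial} and the bijection $\sigma\mapsto \sigma\tau^{-1}$ for each fixed $\tau$, we obtain
\[
m_\mu^{(n)}(x)\;=\;\frac{1}{n!}\sum_{\sigma\in\S_n}x^{\sigma(\mu)}\;\leq\;\sum_{\tau\in\S_n}c_\tau\,\Bigl(\frac{1}{n!}\sum_{\sigma\in\S_n}x^{\sigma\tau(\lambda)}\Bigr)=\Bigl(\sum_\tau c_\tau\Bigr)\,m_\lambda^{(n)}(x)=m_\lambda^{(n)}(x) .
\]
Thus $m_\lambda^{(n)}-m_\mu^{(n)}=\tfrac{1}{n!}\sum_\sigma\bigl(\sum_\tau c_\tau\, x^{\sigma\tau(\lambda)}-x^{\sigma(\mu)}\bigr)$ exhibits an explicit symmetric SONC decomposition, which yields the desired inequality on $\R^n_{>0}$. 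The one potentially delicate point is the invocation of Rado's theorem in Step~1, but this is a well-known fact about majorization and requires no new input. The subsequent symmetrization is routine once one recognizes each AM/GM step as a circuit certificate.
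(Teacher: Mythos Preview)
Your proof is correct and follows essentially the same approach as the paper: both use the Hardy--Littlewood--P\'olya/Rado characterization of majorization to write $\mu$ as a convex combination of permutations of $\lambda$, recognize the resulting expression $\sum_\tau c_\tau x^{\tau(\lambda)} - x^\mu$ as an AGE certificate, and then symmetrize over $\S_n$ using~\eqref{trivial}. The only cosmetic difference is that the paper verifies the AGE property via the relative entropy condition, whereas you invoke the weighted AM/GM inequality directly; these are equivalent.
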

\begin{ex}
The Muirhead inequality yields that for all $x\in\R^n_{> 0}$, we have $m_{(3)}^{(n)}\geq m_{(1,1,1)}^{(n)}$. 
We want to certify this  inequality with SONC certificates.
With the standard change of variable $x_i=e^{y_i}$, we can actually use SAGE certificates, by observing that for $n=3$ we have $f_3(y_1,y_2,y_3):=\frac{1}{3}(e^{3y_1}+e^{3y_2} + e^{3y_3})-e^{y_1+y_2+y_3}$ is indeed a SAGE certificate. Moreover, by \eqref{trivial} we find that $f_n=\sum_{\sigma\in \mathcal{S}_n} \sigma f_3$ is therefore a SAGE and in particular nonnegative. 
\end{ex}
As the AM/GM inequality is a special case of  Muirhead's inequality, this classical result is connected to SONC certificates. For example,  the authors in \cite{htw-2022} derive  a version of the symmetric decomposition shown in \cite{mnrtv-2022} using a version of this inequality. 
We now want to show that Muirhead's inequality can in fact be seen as a symmetric SONC certificate, i.e., that indeed one can always certify this inequality with SONC certificates. To this end, we use SONC techniques to prove the following version of the inequality. Notice that a SONC certificate is defined for the whole of $\R^n$ whereas the Muirhead certificate is restricted to $\R_{>0}^n$.  As seen above, this difference can be consolidated by a change of variables leading to SAGE certificates. In order to keep notation simple, we will just speak of SONC certificates on the open positive orthant without transferring to SAGE.

\begin{proof}[SONC proof of Muirhead's inequality]
There is nothing to prove in the case $\lambda=\mu$.  We assume therefore that $\lambda \neq \mu$. By a theorem of Hardy, Littlewood and Polya (\cite[Thm 2.1.1]{arnold2018majorization} and the discussion thereafter) this is equivalent to saying that $\mu$  can be represented as a convex combination of the permutations of $\lambda$, i.e., that there exists a vector $(\zeta_\sigma)_{\sigma \in \Sc_n}$ of nonnegative reals summing to $1$ which satisfies \begin{equation}\label{eq:Polya}\mu = \sum_{\sigma \in \Sc_n}\zeta_\sigma \sigma \lambda.\end{equation}
Consider the orbits of $\lambda$ and $\mu$ under $\sym_n$, denoted  $\cA = \sym_n \cdot \lambda$ and $\cB= \sym_n \cdot \mu$.  For each $\alpha \in \cA$, we define \[c_\alpha = \nu_\alpha = \sum_{\substack{ \sigma \in \sym_n \\ \sigma \lambda = \alpha}} \zeta_\sigma.\] Then consider \[f(x) = \sum_{\alpha \in \cA} c_\alpha x^\alpha -x^\mu.\] We claim that this polynomial is  SONC on the open positive orthant. Indeed, we have 
$\sum_{\alpha \in \cA}\nu_\alpha \alpha = \beta$ and \[\sum_{\alpha \in \cA} \nu_\alpha \ln \frac{\nu_\alpha}{e \cdot c_\alpha} = -\sum_{\alpha \in \cA}\nu_\alpha = -1.\] Now, taking the sum $\sum_{\sigma \in \sym_n}\sigma f$ and considering the definition of the coefficients $c_\alpha$ we find \[\sum_{\sigma \in \sym_n} \sigma f(x) = \sum_{\sigma \in \sym_n} \sigma x^\lambda - \sum_{\sigma \in \sym_n} \sigma x^\mu,\] and obtain that   $f$ is SONC and have thus shown that the  Muirhead inequality can be expressed as a SONC condition.
\end{proof}
\begin{rem}Note that the condition  in $\lambda \succeq \mu$ is both necessary and sufficient. Indeed, if $\lambda\not\succeq \mu$  then $\mu$ is not in the convex hull of $\S_n \cdot \lambda$, so by the hyperplane separation theorem, we can show that the function $f$ in the proof has $-\infty$ as infimum.\end{rem}

Actually, Theorem~\ref{th:class1} provides a slight generalization of Muirhead's inequality in two respects: first it allows to consider exponents that are not partitions of the same integer, and second we can add coefficients in the inequality. 
More precisely, we can prove inequalities of the form
\[
c_\lambda m_\lambda^{(n)} - c_\mu m_\mu^{(n)}  \geqslant \delta \textrm{ on } \R_{>0}^n
\]
where the coefficients  $c_\lambda, c_\mu$ and $\delta$ do not depend on $n$. As usual, up to rescaling, we may assume that $c_\lambda =1$.

The characterization by Hardy, Littlewood and Polya (\ref{eq:Polya}) says that for two partitions
$\mu$ and $\lambda$ of a same integer $d$, viewed as two vectors in $\R_+^n$, $\mu$
is dominated by $\lambda$ if and only if $\mu$ is in the convex hull of the vectors in the orbit $\S_n \cdot \lambda$.
We want to generalize this, in particular, 
to $\lambda, \mu$ partitions of different
integers. A situation that generalizes the concept of dominance from the geometric
point of view and still allows to apply Theorem~\ref{th:class1} is precisely when the
vector corresponding to $\mu$ is in the convex hull of $\{0\} \cup \S_n \cdot \lambda$.
This leads us to the following definition:

\begin{definition}\label{def:GeneralizedDominance}
Let $\lambda,\mu \in \R^n$. We say that $\lambda$ dominates $\mu$, denoted by $\lambda \succeq_{*} \mu$ if $\mu$ is in the convex hull of $\{0\}\cup \S_n \cdot \lambda.$
\end{definition}

This naturally induces a partial order on $\R^n/\S_n$, that
generalizes the usual dominance order on partitions from
Definition~\ref{def:usual_dominance}.
More precisely, if we denote $|\lambda| = \sum_i \lambda_i$ and $|\mu| =
\sum_i \mu_i$, then $\lambda$ (resp $\mu$) is a partition of $|\lambda|$
(resp $|\mu|$).
The condition $\lambda \succeq_{*} \mu$ then implies $|\lambda|
\geqslant |\mu|$, and when $|\lambda| = |\mu|$, then $\lambda
\succeq_{*} \mu$ precisely means $\lambda \succeq \mu$.
Note that this extension is different from the concept of weak (sub)majorization defined in~\cite{arnold2011majorization}: for $\lambda,\mu \in \R_ +^n$, \[\mu \prec_w \lambda \Leftrightarrow \forall k \in \{1,\dots,n\},\ \sum_{i=1}^k\mu_{[i]} \leqslant  \sum_{i=1}^k\lambda_{[i]} \] where $x_{[1]}\geqslant \cdots\geqslant x_{[n]}$ denote the components of $x \in \R_+^n$ in decreasing order. It is always true that $\mu \preceq_ * \lambda \Rightarrow \mu \prec_w \lambda$. Namely, $\mu \preceq_* \lambda$ implies the existence of a doubly substochastic $P$ such that $\mu =  \lambda P$, and thus $\mu \prec_ w \lambda$ by~\cite[Theorem 2.C.4]{arnold2011majorization}. The converse is true when $|\lambda|=|\mu|$ since in this case weak (sub)majorization become majorization, and the Birkhoff--von Neumann theorem asserts then that $\mu$ is in the convex hull of $\S_n \cdot \lambda$. When $|\lambda| \neq |\mu|$, then the converse is generally not true: $\mu=(3,3,0) \prec_w \lambda = (4,2,1)$ but $\mu$ is not in the convex hull of $\{0\} \cup \S_n \cdot \lambda$.

With this notion we obtain the following generalization of Muirhead inequality, which is also a generalization of \cite[Lemma~3.1]{htw-2022}:
\begin{thm}[Generalized Muirhead inequality] \label{th:GMI}
Let $\lambda,\mu$ be two integer partitions such that $\lambda \succeq_{*} \mu$, and $c > 0$.
Let 
\[
\delta = -c \left(\frac{|\lambda|-|\mu|}{|\lambda|}\right) \left(c\frac{ |\mu|}{|\lambda|}\right)^{\frac{|\mu|}{|\lambda|-|\mu|}}.
\]
Then for any $n \geq |\lambda|$, the inequality 
\[
 m_\lambda^{(n)}(x) - c \  m_\mu^{(n)}(x)  \geqslant \delta \textrm{ on } \R_{>0}^n
\]
is valid for:
\begin{enumerate}
\item any $c>0$ if $|\lambda| > |\mu|$. In this case, the inequality is an equality if and only if 
\[
x_1 = \cdots = x_n = \left(c\frac{ |\mu|}{|\lambda|}\right)^{\frac{1}{|\lambda|-|\mu|}}.
\]

\item any $1\geqslant c>0$ if $|\lambda| = |\mu|$. The inequality is then always strict except if $c = 1$. In this case, equality occurs
\[
\begin{cases}
    \text{on } \R_{>0}^n, & \text{if } \lambda = \mu, \\
    \text{on  the diagonal, } & \text{otherwise}.
\end{cases}
\]
\end{enumerate}
\end{thm}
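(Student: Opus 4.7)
The plan is to derive the result from Theorem~\ref{th:class1} together with Remark~\ref{rem:boundary}, applied to the signomial obtained by the exponential change of variables $x_i = e^{y_i}$. Under this substitution, the claim $m_\lambda^{(n)}(x) - c\, m_\mu^{(n)}(x) \geq \delta$ on $\R_{>0}^n$ becomes the nonnegativity on $\R^n$ of the $\S_n$-invariant signomial
\[
f(y) = \frac{1}{|\S_n \cdot \lambda|}\sum_{\alpha \in \S_n \cdot \lambda} e^{\langle \alpha, y\rangle} - \frac{c}{|\S_n \cdot \mu|}\sum_{\beta \in \S_n \cdot \mu} e^{\langle \beta, y\rangle} - \delta,
\]
which is exactly of the form~\eqref{eq:class1} with a single positive orbit $\cA = \S_n \cdot \lambda$ and a single negative orbit $\cB = \S_n \cdot \mu$. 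The hypothesis $\lambda \succeq_{*} \mu$ is precisely the condition $\mu \in \conv(\{0\} \cup \cA)$ required for Theorem~\ref{th:class1}, possibly up to a boundary correction.

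In Case~(1), when $|\lambda| > |\mu|$, the diagonalization of $f + \delta$ is $h(t) = e^{|\lambda| t} - c\, e^{|\mu| t}$. Its derivative has the unique real zero
\[
t_0 = \tfrac{1}{|\lambda|-|\mu|}\ln\!\bigl(\tfrac{c|\mu|}{|\lambda|}\bigr);
\]
setting $u = e^{(|\lambda|-|\mu|) t_0} = c|\mu|/|\lambda|$ and factoring $e^{|\mu| t_0}$ out of $h(t_0)$ yields exactly $h(t_0) = \delta$. Theorem~\ref{th:class1} (or Remark~\ref{rem:boundary} when $\mu$ lies on the boundary of $\conv(\{0\} \cup \cA)$ without being a vertex) then ensures that $f$ attains its minimum $0$ at the diagonal point $y = (t_0,\dots,t_0)$, equivalently at $x_1 = \cdots = x_n = (c|\mu|/|\lambda|)^{1/(|\lambda|-|\mu|)}$; uniqueness of this minimizer gives the equality characterization. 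The trivial subcases $\mu = 0$ (where $m_\mu^{(n)} \equiv 1$) and $\mu \in \S_n\cdot\lambda$ (vertex case) will be treated by direct inspection.

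In Case~(2), when $|\lambda| = |\mu|$, the vector $\mu$ lies in the face of $\conv(\{0\} \cup \cA)$ opposite to $0$, so we must invoke Remark~\ref{rem:boundary}. The Descartes-type condition it mentions, once our coefficients $1/|\S_n\cdot\lambda|$ and $c/|\S_n\cdot\mu|$ are plugged in, reduces to $1 - c > 0$; this is satisfied when $c < 1$, and in that case $h(t) = (1-c) e^{|\lambda| t}$ is strictly positive on $\R$ with $\inf h = 0$, giving $\delta = 0$ and strict inequality on $\R_{>0}^n$. The limit case $c = 1$ is precisely the classical Muirhead inequality, for which the SONC proof and the equality analysis (equality on the diagonal, and on all of $\R_{>0}^n$ iff $\lambda = \mu$) have already been recorded above.

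The main obstacle will be the careful verification of Remark~\ref{rem:boundary} when $\mu$ is a non-vertex boundary point of $\conv(\{0\} \cup \cA)$ and the separate bookkeeping of the degenerate subcases $\mu = 0$ and $\mu \in \S_n \cdot \lambda$; likewise, for the equality statement in Case~(1), one has to argue that $t_0$ produces the unique minimizer of $h$, which via the proof of Theorem~\ref{th:class1} forces the minimizers of $f$ to lie on the diagonal.
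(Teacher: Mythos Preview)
Your approach is essentially identical to the paper's: pass to the signomial via $x_i=e^{y_i}$, apply Theorem~\ref{th:class1} (together with Remark~\ref{rem:boundary} when $\mu$ lies on the boundary of $\conv(\{0\}\cup\cA)$), and read off the minimum from the diagonalization $h$. You are in fact slightly more careful than the paper in explicitly flagging the non-interior case already in Case~(1) and the degenerate subcases $\mu=0$ and $\mu\in\S_n\cdot\lambda$.
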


\begin{rem}
In the second situation where $|\lambda| = |\mu|$, and therefore $\delta = 0$, we recover a version of \cite[Lemma 3.1]{htw-2022} with the only restriction $c \leqslant 1$ on the coefficients, necessary for the polynomial to be nonnegative.
\end{rem}

\begin{proof}
We consider the polynomial 
\[f(x)=m_\lambda^{(n)}(x) - m_{\mu}^{(n)}(x) = \frac{|\Stab_n\lambda|}{n!}  \sum_{\alpha \in \cA}x^\alpha -  \frac{|\Stab_n\mu|}{n!}  \sum_{\alpha \in \cB}x^\beta.\]
In the first case, the condition on $\lambda$ and $\mu$ allow us to apply Theorem \ref{th:class1} to the signomial $g$ associated to $f$ to see that $\inf_{\R_{>0}^n}{f}= g^{\sage} = g^*$. 
Then, Corollary~\ref{co:symm-circuit-number} yields that $g^*$ is given by the infimum on $\R_{>0}$ of the diagonalization 

\[
h(t) = t^{|\lambda|} - c\ t^{|\mu|},
\] 
which occurs for $t_0= \left(c\frac{ |\mu|}{|\lambda|}\right)^{\frac{1}{|\lambda|-|\mu|}}$ with $h(t_0)=\delta$. This provides the claimed inequality, and the unique minimizer of $f$ on the open positive orthant is $(t_0, \ldots, t_0)$.

The second situation corresponds with Remark~\ref{rem:boundary}: In this situation we have 
\[
h(t) = (1-c)t^{|\lambda|},
\]
which provides the result in the second situation. The case $c=1$ being the Muirhead inequality proved above.
  \end{proof}

\begin{rem}
Theorem~\ref{th:GMI} can be generalized to signomials, since its proof involves Theorem~{\ref{th:class1}} in the SAGE framework, and Definition~\ref{def:GeneralizedDominance} deals with real exponent vectors. One would obtain inequalities such as \[\frac{x^2}{y^4}+\frac{y^2}{x^4} - 5 \left(\frac{1}{x\sqrt{y}} + \frac{1}{y\sqrt{x}}\right) 
\geqslant - \frac{16875}{256} \, . \]
However, motivated by the literature on monomials inequalities, we decided to restrict our attention to this setup. 
\end{rem}

We  see that the property of symmetrization highlighted in \eqref{trivial} gives an identification between the cones in $n$ and $n+1$ variables, which yields that  $(C^{m,k}_{\geq 0}(\cT))_{k \geq n}$ and $(C^{m,k}_{\mathrm{SONC}}(\cT))_{k \geq n}$ are increasing sequences of cones. 
Moreover, this symmetrization is very favorable to SONC decompositions and can in fact be used quite nicely, according to Proposition~\ref{prop:decomp1}. 
This shows in particular that the limit cones $C^{m,\infty}_{\mathrm{SONC}}(\cT)$ and $C^{m,\infty}_{\geq 0}(\cT)$ have the same dimension in $\R^{|\widehat{\cT}|}$, and therefore there are infinite sequences of inequalities that can be proven by SONC techniques.
In contrast to this, in the non-normalized setup, there is a natural identification from $k+1$ to $k$ variables by setting  $x_{k+1}=0$. This map sends $M_\alpha^{(k+1)}$ to $M_\alpha^{(k)}$ and maps both $C^{M,k+1}_{\geq 0}(\cT)$ and $C^{M,k+1}_{\mathrm{SONC}}(\cT)$ into  $C^{M,k}_{\geq 0}(\cT)$ and  $C^{M,k}_{\mathrm{SONC}}(\cT)$, respectively. 
Therefore, in this context the sequences $(C^{m,k}_{\geq 0}(\cT))_{k \geq n}$ and $(C^{m,k}_{\mathrm{SONC}}(\cT))_{k \geq n}$ are decreasing sequence of cones in $\R^{|\widehat{\cT}|}$.
In the setup of polynomials of fixed degrees it can be shown (see for example \cite[Theorem II.2.5]{debus}) that both the sequences of cones of symmetric nonnegative forms as well as the cones of symmetric  sums of squares approach a full-dimensional limit.

The next example shows however, that this may fail for the SONC cone:
\begin{ex}\label{ex:1} 
Consider the set of representatives $\cT=\{(6,0),(0,6),(3,3)\}$ and  for $k\geqslant 2$, we take  $M_{(6)}^{(k)} = \sum_{i=1}^k x_i^6$ and $M_{(3,3)}^{(k)} = \sum_{1\leqslant i<j\leqslant k}x_i^3x_j^3$. Defining  $f_k:=\alpha M_{(6)}^{(k)} + \beta M_{(3,3)}^{(k)}$ we would like to know for which values of $\alpha$ and $\beta$ the resulting family of symmetric polynomials  $f_k$ is nonnegative for all values of $k \geqslant 2$, and for which values this nonnegativity can be established by SONC certificates.   
Since  $\left(\sum_{i=1}^k x_i^3\right)^2 = M_{(6)}^{(k)} + 2{M}_{(3,3)}^{(k)}$, 
it is clear that $C^{M,\infty}_{\geqslant 0}(\cT)$ is $2$-dimensional.
	However,  by Propositions~\ref{lem:support} and~\ref{th:symmetric-decomp} we find that $\alpha M_{(6)}^{(k)} + \beta M_{(3,3)}^{(k)} \in C^k_{\mathrm{SONC}}( \cT) $ if and only if there exists $\alpha_k>0 \textrm{ such that } \alpha_k(x_1^6+x_2^6) + \beta x_1^3x_2^3 \geqslant 0 \textrm{ and } \sum_{1 \leqslant i < j \leqslant k} \alpha_k \leqslant \alpha.$ The first condition implies that $\alpha_k \geqslant\frac{\beta^2}{4}$, and therefore, for  $k \gg 2$, we have $\sum_{1 \leqslant i < j \leqslant k} \alpha_k \geqslant \binom{k}{2} \frac{\beta^2}{4} > \alpha$. This is, however, impossible and we can thus conclude that  $\beta=0$. 
Hence, for every $\alpha \ge 0$ and $\beta \neq 0$, there exists 
		a $k$ such that $\alpha M_{(6)}^{(k)} + \beta {M}_{(3,3)}^{(k)}$ is not in 
		$C^k_{\mathrm{SONC}}(\mathcal{T})$
		anymore. While this does not mean that $C^k_{\mathrm{SONC}}(\mathcal{T})$ is lower-dimensional,
		the limit cone 
		\[C^{M,\infty}_{\mathrm{SONC}}(\cT) = \R_+ \times \{0\} \subset \R^2\] is of codimension $1$.

\end{ex}
These investigations give the following theorem.
\begin{thm}\label{th:4.9}
Let $\cT \subset \N^n$ be $\S_n$-invariant and $\cT^+ = \cT \cap (2\N)^n$. 
Assume that for every $\beta \in \widehat{\cT} \setminus \cT^+$
we have $\beta\in \conv (\cT^{+} \cup \{0\})$. Then:
\begin{enumerate}
 \item The sequence of cones $C^{m,k}_{\mathrm{SONC}}(\cT)$ is increasing and full-dimensional so that the cone $C^{m, \infty}_{\mathrm{SONC}}(\cT)$ is also full-dimensional,
 \item  The  cone $C^{M,\infty}_{\mathrm{SONC}}(\cT)$ can be of strictly lower dimension than the cone $C^{M,\infty}_{\geqslant 0}(\cT)$.
 \end{enumerate}
\end{thm}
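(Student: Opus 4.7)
The plan is to deduce part~(1) from an averaging identity that relates $m_\lambda^{(k+1)}$ to the $m_\lambda^{(k)}$ in a way compatible with SONC decompositions, and to deduce part~(2) directly from Example~\ref{ex:1}. The first step is to establish, for every $\lambda \in \widehat{\cT}$ and every $k \geq \len(\lambda)$,
\[
m_\lambda^{(k+1)}(x_1,\ldots,x_{k+1}) \ = \ \frac{1}{k+1}\sum_{i=1}^{k+1} m_\lambda^{(k)}(x_1,\ldots,\hat{x_i},\ldots,x_{k+1}).
\]
This follows directly from \eqref{trivial} by partitioning $\S_{k+1}$ according to the position $i = \sigma^{-1}(k+1)$ to which the extra zero exponent is sent: for each fixed $i$, the restriction of $\sigma$ yields a bijection of $\{1,\ldots,k+1\}\setminus\{i\}$ with $\{1,\ldots,k\}$, giving an element of $\S_k$.

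Then I would use this identity to show that the sequence $(C^{m,k}_{\mathrm{SONC}}(\cT))_{k \geq n}$ is increasing in $\R^{|\widehat{\cT}|}$: if $f(x_1,\ldots,x_k) = \sum_\lambda c_\lambda m_\lambda^{(k)}$ is SONC in $k$ variables, then
\[
\sum_\lambda c_\lambda m_\lambda^{(k+1)}(x_1,\ldots,x_{k+1}) \ = \ \frac{1}{k+1}\sum_{i=1}^{k+1} f(x_1,\ldots,\hat{x_i},\ldots,x_{k+1})
\]
is a sum of SONC polynomials, since each summand inherits a SONC circuit decomposition from $f$ by a coordinate re-indexing. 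For full-dimensionality, Proposition~\ref{prop:fd} applied to $\cT$ gives that $C_{\mathrm{SONC}}(\cT)$ has non-empty interior in $\R^{\cT}$; applying the Reynolds operator, which is the identity on $V^n(\cT)$ and sends SONCs to SONCs, yields $C^{m,n}_{\mathrm{SONC}}(\cT) = C_{\mathrm{SONC}}(\cT) \cap V^n(\cT)$ with non-empty interior in $V^n(\cT) \cong \R^{|\widehat{\cT}|}$. The increasing property then transfers full-dimensionality to every $C^{m,k}_{\mathrm{SONC}}(\cT)$, and since $C^{m,\infty}_{\mathrm{SONC}}(\cT) = \bigcap_{k \geq n} C^{m,k}_{\mathrm{SONC}}(\cT) = C^{m,n}_{\mathrm{SONC}}(\cT)$ for an increasing chain, the limit cone is full-dimensional as well.

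For part~(2), I would invoke Example~\ref{ex:1}: the support $\cT = \{(6,0),(0,6),(3,3)\}$ satisfies the hypothesis since $(3,3) = \tfrac{1}{2}(6,0) + \tfrac{1}{2}(0,6) \in \conv(\cT^+ \cup \{0\})$, and the example establishes that $C^{M,\infty}_{\mathrm{SONC}}(\cT) = \R_+ \times \{0\}$ is one-dimensional, whereas $C^{M,\infty}_{\geqslant 0}(\cT)$ contains $M_{(6)}^{(k)}$ and the square $\bigl(\sum_i x_i^3\bigr)^2 = M_{(6)}^{(k)} + 2 M_{(3,3)}^{(k)}$ for every $k$, and is therefore two-dimensional. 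The hard part I expect lies in reconciling the two different identifications of $\R^{|\widehat{\cT}|}$ with symmetric polynomial spaces as the number of variables grows: the averaging formula of the first step is precisely what couples the normalized setting cleanly, while in the non-normalized setting the coefficient of $M_{(3,3)}^{(k)}$ forces a circuit-number bound scaling like $\binom{k}{2}$, and it is exactly this incompatibility that creates the strict dimension drop.
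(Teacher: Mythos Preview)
Your proof is correct and follows essentially the same approach as the paper's own (very terse) argument: you unpack the reference to \eqref{trivial} into the explicit averaging identity $m_\lambda^{(k+1)} = \frac{1}{k+1}\sum_i m_\lambda^{(k)}(x_1,\ldots,\hat{x_i},\ldots,x_{k+1})$ to get the increasing chain, invoke Proposition~\ref{prop:fd} together with the Reynolds operator for full-dimensionality, and appeal to Example~\ref{ex:1} for part~(2). The only cosmetic addition worth making explicit is that your averaging identity needs $\len(\lambda) \leq k$, which holds since $\cT \subset \N^n$ and $k \geq n$.
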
 
\begin{proof}
The proof for $(1)$ follows since we get the inclusions from \eqref{trivial} and
the full-dimensionality after symmetrization from Proposition~\ref{prop:fd},
while $(2)$ is established by Example \ref{ex:1}.
\end{proof}
Theorem~\ref{th:4.9} gives some indications that in the setup of symmetric inequalities given by monomial symmetric polynomials that are not normalized, 
the SONC approach may in general not be able to certify nonnegativity for a large fraction of nonnegative forms if $n$ is large.
We leave it as a future task to study the relation of the cones $C^{M,\infty}_{\mathrm{SONC}}(\cT)$ and $C^{M,\infty}_{\geq 0}(\cT)$
in Theorem~\ref{th:4.9}(2) in more detail.

\bibliography{bibSymmetricSONC}
	\bibliographystyle{plain}
	
\end{document}